\newcommand{\ie}{{\em i.e.}\ }
\newtheorem{theorem}{Theorem}[section]
\newtheorem{lemma}[theorem]{Lemma}
\newtheorem{definition}[theorem]{Definition}
\newtheorem{proposition}[theorem]{Proposition}
\newtheorem{corollary}[theorem]{Corollary}
\newtheorem{example}[theorem]{Example}
\newtheorem{remark}[theorem]{Remark}
\newcommand{\reminder}[1]{}
\newcommand{\opname}[1]{\operatorname{\mathsf{#1}}}
\newcommand{\supp}{\opname{supp}}
\newcommand{\sgn}{\mbox{sgn}}
\renewcommand{\tilde}[1]{\widetilde{#1}}
\begin{document}

\title{Conjugation on reddening sequences and conjugation difference}
\author{Siyang Liu}
\address{School of Mathematics, Hangzhou Normal University, Hangzhou, China}
\email{siyangliu@hznu.edu.cn}

\author{Jie Pan}
\address{Department of Mathematics, Faculty of Sciences, University of Sherbrooke, Sherbrooke, Quebec, Canada}
\email{jie.pan@usherbrooke.ca}

\begin{abstract}  
We describe the conjugation of the reddening sequence according to the formula of $c$-vectors with respect to changing the initial seed. As applications, we extend the Rotation Lemma, the Target before Source Theorem, and the mutation invariant property of the existence of reddening sequences to totally sign-skew-symmetric cluster algebras. Furthermore, this also leads to the construction of conjugation difference which characterizes the number of red mutations a maximal green sequence should admit in any matrix pattern with the initial seed changed via mutations.

\end{abstract}

\keywords{Totally sign-skew-symmetric matrix, Maximal green sequence, Conjugation Lemma, Rotation Lemma, Target before Source Theorem, Conjugation difference.}

\subjclass[2010]{13F60}

\maketitle


\section{Introduction}	
Fomin and Zelevinsky introduced totally sign-skew-symmetric cluster algebras in \cite{fomin2002fundations}, which include skew-symmetric cluster algebras and skew-symmetrizable cluster algebras as significant examples. In the past more than twenty years, a great deal of results were presented in numerous papers, and most of them mainly focused on skew-symmetric cases or skew-symmetrizable cases due to their connection with triangulated surfaces, categories, scattering diagrams, etc.. However, although Fomin and Zelevinsky already found many non-skew-symmetrizable totally sign-skew-symmetric cluster algebras in \cite{fomin2002fundations}, people do not develop sufficiently effective methods to deal with them until Huang and Li unfolded a large collection of totally sign-skew-symmetric cluster algebras, which contains all acyclic cluster algebras, as skew-symmetric cluster algebras of infinite rank to prove that the former inherit a majority of properties of the latter via a projection induced by the unfolding in \cite{huang2018acyclic}. It was also in this paper that they confirmed every acyclic sign-skew-symmetric matrix is totally mutable, which was conjectured by Berenstein, Fomin, and Zelevinsky in \cite{berenstein2005upperbounds}.  It then provided a large class of non-skew-symmetrizable totally sign-skew-symmetric matrices. Later, Li, Liu, and Pan further discussed totally sign-skew-symmetric cluster algebras and found many important properties can be extended to this general case (cf., \cite{li2022polytope, liu2024acyclic, pan2023mutation}). These suggest that we could return to the Laurent expression of cluster variables and mutations of seeds to talk about cluster structures more algebraically or more combinatorially to overcome the limitation of any connections mentioned above, which is also one principle of this paper. 

The maximal green sequence, introduced by Keller in \cite{keller2011mgs} to compute combinatorial  Donaldson-Thomas invariants of quivers and quantum dilogarithms, is a particular sequence of mutations given by a sequence of indices of the matrix based on the sign-coherence of $c$-vectors. Since the sign-coherence of $c$-vectors was generalized to totally sign-skew-symmetric cluster algebras in \cite{li2022polytope}, this notation can be defined naturally for totally sign-skew-symmetric matrices. 

Although the definition seems simple, maximal green sequences are rather mysterious. Even the existence of maximal green sequences is still an open problem. Partial answers were obtained in different contexts. For example, Br\"ustle, Dupont and P$\mathrm{\acute{e}}$rotin showed quivers of finite type or acyclic quivers admit a maximal green sequence in \cite{brustle2014mgs}; Mills constructed a maximal green sequence for each quiver arising from a triangulation of some marked surface which is not once-punctured and closed in the sense of \cite{fomin2008surfaces} which claimed the existence of maximal green sequence for quivers of mutation-finite type unless it arises from a triangulation of a once-punctured closed surface, or is in the mutation class of $\mathbb X_7$ in \cite{mills2017mgs}. 

Hence it was conjectured the existence of maximal green sequence is closely related to some properties of cluster algebras, such as the equality of a cluster algebra with its corresponding upper cluster algebra. Since a cluster algebra corresponds to a mutation equivalent class of exchange matrices, it is natural to study the existence of maximal green sequence under matrix mutations. Muller's results in \cite{muller} showed that the existence of maximal green sequences is not mutation invariant. However, he also proved that the existence of a weaker version of maximal green sequences, namely reddening sequences, is mutation invariant using scattering diagrams via conjugation. Then, here comes a natural question: how is a mutation sequence changed under conjugation? In this paper, we analyze their conjugation behavior via mutation of $c$-vectors for a better understanding of maximal green sequences or reddening sequences.

This has already been partially studied for skew-symmetrizable cluster algebras. In \cite{brustle2017semipicture}, Br\"ustle, Hermes, Igusa, and Todorov proved if a skew-symmetrizable matrix admits a maximal green sequence, then so does every matrix under matrix mutations along this sequence, which is called the {\em Rotation Lemma} (see Corollary \ref{rotationlemma}), and they used it to prove a conjecture on maximal green sequences called the {\em  Target before Source Conjecture} (see Theorem \ref{tbsc}). They also proved another similar theorem for any skew-symmetrizable matrix, see Theorem \ref{tbscc} for details, which we call the {\em Target before Source Theorem of $c$-vectors}. An analogue of Rotation Lemma for Jacobian algebras of Dynkin quivers was formulated by Igusa in \cite{igusa2019rotation} and for any finite dimensional Jacobian algebra by Li and Liu in \cite{liu2020rotation}. 

In this paper, we calculate how a $c$-vector changes under mutations of the initial seed and find that it is analogous to the simple reflection acting on simple roots -- the mutation in direction $k$ only changes the sign of $c$-vectors equal to $\pm e_k$. This indicates the \emph{Conjugation Lemma} (see Theorem \ref{conj}) claiming how the color of a mutation is changed under conjugation, which then helps us to extend the Rotation Lemma to totally sign-skew-symmetric matrices as rotation can be regarded as the composition of conjugation and cancellation of two consecutive mutations in the same direction and leads to the result of Muller in \cite{muller} for totally sign-skew-symmetric cluster algebras: the conjugation of a reddening sequence is a reddening sequence of the mutated initial seed, hence the existence of reddening sequences is mutation invariant.

Furthermore, we construct the conjugation difference of each vertex in a rooted $n$-regular tree, which precisely measures the change of the number of red mutations admitted by a reddening sequence under changing the initial seed and thus shows what a maximal green sequence of any matrix in the mutation equivalent class should be like in a given matrix pattern with respect to the conjugation difference.

As applications, we then prove the Target before Source Theorem for acyclic sign-skew-symmetric matrices and the Target before Source Theorem of $c$-vectors for any totally sign-skew-symmetric matrix using the Rotation Lemma and the hereditary property of the existence of maximal green sequences. Our proof is quite combinatorial and does not depend on the categorification of cluster algebras.  

The paper is organized as follows. In Section \ref{section: preliminary}, we give basic definitions of totally sign-skew-symmetric matrices and maximal green sequences as well as some properties on $C$-matrices and $G$-matrices. In Section \ref{section: conjugation}, we prove the Conjugation Lemma and the Rotation Lemma for totally sign-skew-symmetric matrices, which depict the conjugation of reddening sequences. In Section \ref{section: potential}, we construct the conjugation difference which measures the theoretical minimum of red mutations a reddening sequence admits. We finally extend the Target before Source Theorem to totally sign-skew-symmetric algebras in Section \ref{section: target before source}.

\section{Preliminaries}\label{section: preliminary}
 In this paper, all matrices are integer matrices and for a natural number $n$, we denote the set $\{1,2,\dots,n\}$ by $[1,n]$, and for a real number $a$, we denote $[a]_+:=\mathrm{max}\{a,0\}$.  Let $I$ be the $n\times n$ identity matrix, and we often denote the $j$-th column of $I$ by $e_j$ for $j\in [1,n]$.
\subsection{Matrix patterns}
 For any $n\times n$ matrix $B=(b_{ij})_{n\times n}$, we associate a directed (simple) graph $\Gamma_B$ whose vertices are given by $1,2,\dots,n$ and there is a directed edge $i\rightarrow j$ if and only if $b_{ij}>0$. 
\begin{definition}
Let $B=(b_{ij})_{n\times n}$ be an $n\times n$ matrix. We say
	\begin{enumerate}
		\item[-]  $B$ is {\em skew-symmetrizable} if there is a diagonal matrix $D=\text{diag}(d_i, i\in [1,n])$ with positive integer diagonal entries such that $d_ib_{ij} = -d_jb_{ji}$ for any $i, j \in [1,n]$, and $D$ is called a {\em skew-symmetrizer of $B$}.
		\item[-]  $B$  is  {\em sign-skew-symmetric} if either $b_{ij}b_{ji} <0$ or $b_{ij} = b_{ji} = 0$ for any $i, j \in [1,n]$.
		\item[-]  $B$ is  {\em non-positive}, if $b_{ij}\leq 0$ for all $i, j \in [1,n]$; $B$ is  {\em non-negative}, if $b_{ij}\leq 0$ for all $i, j \in [1,n]$, and
		\item[-]  $B$ is  {\em acyclic}, if $\Gamma_B$ is acyclic, \ie, there are no oriented cycles in $\Gamma_B$.
			\end{enumerate}                                                                                         
\end{definition}

 Skew-symmetric matrices are skew-symmetrizable, and skew-symmetrizable matrices are sign-skew-symmetric. 

\begin{definition}
    Let $B=(b_{ij})_{n\times n}$ be a sign-skew-symmetric matrix, and let $j\in [1,n]$. We say the index $j$ is a {\em source}  of $B$ if $b_{ji}\geq 0$ for any $i\in [1,n]$, and we say the index $j$ is a {\em sink} of $B$ if $b_{ji}\leq 0$ for any $i\in [1,n]$.
\end{definition}

   Let $B=(b_{ij})_{n\times n}$ be an arbitrary matrix. For $k\in [1,n]$, define another matrix $B' =(b'_{ij})_{n\times n}:= \mu_k(B)$  which is called the {\em mutation of $B$ at $k$} and obtained from $B$ by the following rules: 
\begin{equation}
b'_{ij}=\begin{cases} -b_{ij},&\text{if $i=k$ or $j=k$,}\\b_{ij} + \frac{|b_{ik}|b_{kj}+b_{ik}|b_{kj}|}{2},&\text{otherwise.}\end{cases}	 
\end{equation}
   It is easy to check that $\mu_k\mu_k(B) = B$. For a sign-skew-symmetric matrix $B$,  $k$ is a source of $B$ if and only if $k$ is a sink of $\mu_k(B)$, and in this case, $\mu_k(B)$ is obtained from $B$ by replacing $b_{ik}$ with $-b_{ik}$ and  $b_{kj}$ with $-b_{kj}$ for any $i,j$.

   Matrix mutations give rise to an equivalence relation among square matrices which are called {\em mutation equivalence} and usually denoted by $B'\sim B$. The mutation of a sign-skew-symmetric matrix is not necessarily sign-skew-symmetric, so Berenstein, Fomin, and Zelevinsky introduced the following concept in \cite{berenstein2005upperbounds}.

\begin{definition}
	A matrix $B$ is called {\em totally mutable} if every matrix mutation equivalent to $B$ is sign-skew-symmetric. In this case, we also say $B$ is a {\em totally sign-skew-symmetric matrix}.
 \end{definition}
 
 If $B$ is skew-symmetrizable, then $\mu_k(B)$ is skew-symmetrizable with the same skew-symmetrizer as $B$ for any $k\in[1,n]$. So skew-symmetrizable matrices are totally sign-skew-symmetric, and there are lots of totally sign-skew-symmetric matrices which are not skew-symmetrizable. Indeed, Huang and Li proved that acyclic sign-skew-symmetric matrices are totally sign-skew-symmetric in \cite{huang2018acyclic}, and Fomin and Zelevinsky gave many examples of non-skew-symmetrizable totally sign-skew-symmetric matrices in [Proposition 4.7, \cite{fomin2002fundations}]. The matrices given in [Proposition 4.7, \cite{fomin2002fundations}] are mutation-cyclic in the following sense.
 
  \begin{definition} Let $B$ be a totally sign-skew-symmetric matrix. We say $B$ is {\em mutation-acyclic} if $B$ is mutation equivalent to an acyclic matrix. Otherwise, we say $B$ is {\em mutation-cyclic}.
 \end{definition} 
 
 \begin{definition}
 	Let $B=(b_{ij})_{n\times n}$ be an acyclic sign-skew-symmetric matrix. A sequence of indices $(j_1,j_2,\dots,j_n)$ is called an {\em admissible sequence of sources of $B$}, if the following conditions are satisfied:
 	\begin{enumerate}
 		\item[(i)] $\{j_1,j_2,\dots,j_n\}$ is a permutation of $\{1,2,\dots,n\}$;
 		\item[(ii)] $j_1$ is a source of $B$, and $j_k$ is a source of $\mu_{j_{k-1}}\dots \mu_{j_2}\mu_{j_1}(B)$ for $2\leq k\leq n$. 
 	\end{enumerate}
 \end{definition}
 
The above definition is equivalent to say $l<k$ if $b_{j_lj_k}>0$ for $l,k\in [1,n]$. Notice that every acyclic sign-skew-symmetric matrix admits an admissible sequence of sources.
 
 Let $\mathbb{T}_n$ be the $n$-regular tree whose edges emanating from each common vertex are labeled by $1,2,\dots, n$.  
 
 \begin{definition}
 	A  {\em matrix pattern (\textit{or a} tropical cluster pattern) on $\mathbb{T}_n$ at the initial vertex $t_0\in \mathbb T_n$} is an assignment  of a triple $\Sigma_t=(B_t, C_t, G_t)$, which is called a \emph{seed}, to every vertex $t\in \mathbb{T}_n$ satisfying the following conditions:
 \begin{enumerate}
  \item For each vertex $t\in \mathbb{T}_n$, the triple $\Sigma_t=(B_t, C_t, G_t)$ consists of three $n\times n$ integer matrices. We call the matrix $B_t=(b_{ij,t})_{n\times n}$ the {\em exchange matrix}, we call the matrix 
  $C_t=(c_{ij,t})_{n\times n}$ a {\em $C$-matrix} and its column vectors are called {\em  $c$-vectors}, and we call the matrix 
  $G_t=(g_{ij,t})_{n\times n}$ a {\em $G$-matrix} and its column vectors are called {\em $g$-vectors}.
\item On the initial vertex $t_0$, the exchange matrix $B_{t_0}$  is a totally sign-skew-symmetric matrix, and   $C_{t_0}$ and $G_{t_0}$ are the identity matrix $I$. 
\item For each edge $t\frac{k}{\quad \;}t'$ on $\mathbb T_n$,   the triple $\Sigma_{t'}=(B_{t'}, C_{t'}, G_{t'})$ is obtained from $\Sigma_t=(B_t, C_t, G_t)$ by the following rules:
\begin{enumerate}
	\item[-]  $B_{t'}=\mu_k(B_t)$.

	\item[-] $C_{t'}=(c_{ij,t'})_{n\times n}$ is  obtained from $\Sigma_t$ by the following rule:

\begin{equation}\label{ccmut}
c_{ij, t'}=\begin{cases} -c_{ik, t},&\text{if $j=k$;}\\c_{ij, t} + \frac{c_{ik,t}|b_{jk,t}|+ |c_{ik,t}|b_{jk,t}}{2}  ,&\text{otherwise.}\end{cases}	 
\end{equation}

	\item[-]   $G_{t'}=(g_{ij,t'})_{n\times n}$ is  obtained from $\Sigma_t$ by the following rule:
\begin{equation}\label{gmut}
 g_{ij, t'}=\begin{cases} g_{ij,t},&\text{if $j\neq k$;}\\-g_{ik,t} + \sum\limits_{s=1}^ng_{is,t}[-b_{sk,t}]_+- \sum\limits_{s=1}^nb_{is,t}[-c_{sk,t}]_+,&\text{otherwise.}\end{cases}
 \end{equation}
\end{enumerate}
\end{enumerate}
\end{definition}

An immediate check shows that $\mu_k\mu_k(\Sigma_t) = \Sigma_t$ and a matrix pattern is determined by the initial totally sign-skew-symmetric matrix $B_{t_0}$.  
 We will use the notation $(B_t,C_t^{t_0},G_t^{t_0})_{t\in \mathbb T_n}$ to denote a matrix pattern on $\mathbb T_n$ at the initial vertex $t_0\in \mathbb T_n$, and we also frequently denote a matrix pattern on $\mathbb T_n$ by $(B_t,C_t,G_t)_{t\in \mathbb T_n}$ for simplicity.
For any $k\in [1,n]$ and $t\in \mathbb T_n$, we denote $c^{t_0}_{k,t}$ the $k$-th column of the $C$-matrix $C_t^{t_0}$ and $g^{t_0}_{k,t}$ the $k$-th column of the $G$-matrix $G_t^{t_0}$.

The \emph{exchange graph} associated to a matrix pattern is the quotient graph of $\mathbb T_n$ by relations that $t\sim t'$ if $(B_t,C_t,G_t)=(B_{t'},C_{t'},G_{t'})$ up to a permutation.

\subsection{Properties of $C$-matrices and $G$-matrices}
A nonzero vector is said to be {\em sign-coherent}  if it has either all non-negative coordinates or all non-positive coordinates.  For any matrix pattern, Fomin and Zelevinsky proved the first duality in \cite{fomin2007coefficients} and they conjectured every $c$-vector is sign-coherent and every row vector of the $G$-matrix is sign-coherent in \cite{fomin2007coefficients}. The sign-coherence conjecture was solved for skew-symmetrizable matrices in \cite{gross2018bases, nakanishi2012duality} and for totally sign-skew-symmetric matrices in \cite{li2022polytope}. The second duality for skew-symmetrizable matrices was proved in \cite{nakanishi2012duality}. 
 Here we summarize some important properties of matrix pattern as follows. 

\begin{lemma}
	Let $(B_t,C_t,G_t)_{t\in \mathbb T_n}$ be a matrix pattern at the initial vertex $t_0$. Then 
	\begin{enumerate}
	\item[(i)] (First duality) for any $t\in \mathbb T_n$, we have that  \begin{equation}\label{firstduality}
		G_tB_t = B_{t_0}C_t.
	\end{equation}
	\item[(ii)] (Sign-coherence)  for any $t\in \mathbb T_n$, the column vectors of $C_t$ and the row vectors of $G_t$ are sign-coherent.
	\end{enumerate}
		If in addition, $B_{t_0}$ is a skew-symmetrizable matrix with the diagonal matrix $D$, then 
	\begin{enumerate}
	\item[(iii)] (Second duality for skew-symmetrizable matrices)
	for any $t\in \mathbb T_n$, we have that  \begin{equation}
		D^{-1}G_tDC_t = I.
		\end{equation}
	\end{enumerate}
\end{lemma}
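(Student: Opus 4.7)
The plan is to treat the three claims separately, since (i) and (iii) are algebraic identities amenable to induction on the tree $\mathbb{T}_n$, while (ii) is a deep sign-coherence theorem that must be cited rather than reproved.

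For (i), I would proceed by induction on the graph distance $d(t_0, t)$. The base case $t = t_0$ is immediate, since $G_{t_0} = C_{t_0} = I$ makes both sides of (\ref{firstduality}) equal to $B_{t_0}$. For the inductive step, fix an edge $t \frac{k}{\quad \;} t'$ with $G_t B_t = B_{t_0} C_t$ by hypothesis, and compare $G_{t'}B_{t'}$ with $B_{t_0}C_{t'}$ column by column. For columns $j \neq k$, the $C$-matrix mutation (\ref{ccmut}) adds a multiple of $c_{*k,t}$ proportional to $b_{jk,t}$, while $B_{t'}$ differs from $B_t$ in column $j$ by an analogous correction; applying the inductive hypothesis and the $B$-mutation rule, both sides pick up the same contribution. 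For the $k$-th column, the $C$-matrix column is negated, and the $G$-matrix mutation rule (\ref{gmut}) is designed precisely so that the correction term $-\sum_s b_{is,t}[-c_{sk,t}]_+$ cancels against the sign flip in $B_{t_0}C_{t'}$ and the $\sum_s g_{is,t}[-b_{sk,t}]_+$ term matches the contribution from flipping column $k$ of $B_t$. The calculation is elementary but requires careful sign tracking.

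For (ii), sign-coherence is substantive. When $B_{t_0}$ is skew-symmetric or skew-symmetrizable, this was proved by Gross, Hacking, Keel and Kontsevich via scattering diagrams in \cite{gross2018bases}, and the equivalence between column sign-coherence of $C_t$ and row sign-coherence of $G_t$ is Nakanishi-Zelevinsky's duality in \cite{nakanishi2012duality}. For $B_{t_0}$ totally sign-skew-symmetric, this is the main theorem of \cite{li2022polytope}, building on the unfolding technique of Huang-Li in \cite{huang2018acyclic}. I would simply invoke these references.

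For (iii), once (i) and (ii) are available, I would set $H_t := D^{-1} G_t D C_t$ and prove $H_t = I$ by induction on $d(t_0,t)$. The base case is clear. For the inductive step along an edge $t \frac{k}{\quad \;} t'$, substitute the mutation rules (\ref{ccmut}) and (\ref{gmut}) into $H_{t'}$; the skew-symmetrizability identity $d_i b_{ij} = -d_j b_{ji}$ converts between $b_{sk,t}$ and $b_{ks,t}$ terms, and sign-coherence from (ii) allows one to pull a common sign out of the $[-b_{sk,t}]_+$ (resp.\ $[-c_{sk,t}]_+$) expressions for all $s$, so that the two "max-plus" corrections collapse and cancel. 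This is precisely Nakanishi-Zelevinsky's argument in \cite{nakanishi2012duality}, which I would cite. The main obstacle is (ii): the sign-coherence conjecture was open for many years and its resolution, especially in the totally sign-skew-symmetric setting, relies on nontrivial machinery that is well outside the scope of a preliminary lemma, so the only reasonable strategy is to quote the existing results.
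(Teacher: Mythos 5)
Your proposal is correct in substance, but it is worth noting that the paper itself offers no proof of this lemma at all: it is stated explicitly as a summary of known results, with (i) attributed to Fomin--Zelevinsky \cite{fomin2007coefficients}, (ii) to \cite{gross2018bases,nakanishi2012duality} in the skew-symmetrizable case and to \cite{li2022polytope} (building on \cite{huang2018acyclic}) in the totally sign-skew-symmetric case, and (iii) to \cite{nakanishi2012duality}. So where the paper quotes, you partially reprove: your treatment of (ii) by citation coincides with the paper's stance (and is indeed the only reasonable option), while your inductive verifications of (i) and (iii) along $\mathbb T_n$ reconstruct the Fomin--Zelevinsky/Nakanishi--Zelevinsky arguments and buy self-containedness at the cost of a computation the authors deliberately avoid. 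One caution about that computation: in the totally sign-skew-symmetric (non-skew-symmetrizable) setting the column-$j$ step of the induction for (i) is less innocuous than ``both sides pick up the same contribution,'' because $|b_{jk,t}|$ and $|b_{kj,t}|$ need not be equal, and matching the correction coming from (\ref{ccmut}) with the one coming from the $B$-mutation and the $[-b_{sk,t}]_+$, $[-c_{sk,t}]_+$ terms of (\ref{gmut}) uses sign-skew-symmetry of every $B_t$ together with sign-coherence of the $k$-th $c$-column and $g$-row; consequently (ii) should logically be invoked before the induction for (i) is carried out in this generality (or one simply cites \cite{li2022polytope,liu2024acyclic,pan2023mutation}, where these identities are established alongside sign-coherence). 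With that ordering caveat, your plan is sound and, for (iii), correctly reduces to the Nakanishi--Zelevinsky argument you cite.
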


 Due to the sign-coherent property of $C$-matrices and $G$-matrices, we will use $\varepsilon_k(C_t)$ to denote the sign of the $k$-th column of the $C$-matrix $C_t$ and  $\varepsilon_k(G_t)$ to denote the sign of the $k$-th row of the $G$-matrix $G_t$ for any $k\in [1,n]$ and any vertex $t\in \mathbb T_n$. Then we can orient the exchange graph with respect to the initial vertex $t_0$ such that each arrow corresponds to a green mutation and denote it as $E_{t_0}$. For each arrow $\alpha$ of $E_{t_0}$, denote by $\alpha^{-1}$ the opposite arrow of $\alpha$. Slightly abusing of notion, we call $\alpha_{i_l}^{\epsilon_l}\cdots\alpha_{i_1}^{\epsilon_1}$ a path in $E_{t_0}$ if the target of $\alpha_{i_j}^{\epsilon_j}$ coincides with the source of $\alpha_{i_{j+1}}^{\epsilon_{j+1}}$ for any $j\in[1,l-1]$, where $\alpha_j$ are arrows in $E_{t_0}$ and $\epsilon_j\in\{1,-1\}$.
 
 For totally sign-skew-symmetric matrices, the second duality is generalized for mutation-acyclic case in \cite{liu2024acyclic} and in full generality in \cite{pan2023mutation}. To give this generalization, let us introduce some notations. 
 
 If $B$ is  totally sign-skew-symmetric matrix, then it follows from the facts that $\mu_k(-B)=-\mu_k(B)$ and $\mu_k(B^T) = (\mu_k(B))^T$ that $-B$, $B^T$ and $-B^T$ are totally sign-skew-symmetric. For a matrix pattern $(B_t,C_t,G_t)_{t\in \mathbb T_n}$ at the initial vertex $t_0$, we will denote $(\tilde B_t, \tilde C_t, \tilde G_t)_{t\in \mathbb T_n}$ the matrix pattern at the initial vertex $t_0$ such that $\tilde B_{t_0} = -B_{t_0}^T$, and we call it the {\em dual matrix pattern of the matrix pattern $(B_t,C_t,G_t)_{t\in \mathbb T_n}$}. Clearly $\tilde B_t= -B_t^T$ for any $t\in \mathbb T_n$.  Then we have the following lemma.
 \begin{lemma}\label{key}
 	With the notations above, for any $t\in \mathbb T_n$ and $i,j,k\in [1,n]$, we have that 
 	\begin{enumerate}
 	\item[(i)] the signs of the $k$-th columns of  $C_t$ and $\tilde C_t$ are the same;
 	\item[(ii)]the signs of the $k$-th rows of $G_t$ and $\tilde G_t$ are the same;
 	 	\item[(iii)] (Second duality) \begin{equation}\label{secdual} \tilde G_t^TC_t= I;\end{equation}
 	\item[(iv)]  the $i$-th column of $C_t$ is $\pm e_j$ if and only if the $i$-th column of $\tilde{C}_t$ is $\pm e_j$; and the $i$-th row of $G_t$ is $\pm e^T_j$ if and only if the $i$-th row of $\tilde{G}_t$ is $\pm e^T_j$.
 	\end{enumerate}
 \end{lemma}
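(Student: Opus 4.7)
The plan is to prove assertions (i)--(iv) simultaneously by induction on the graph distance $d(t_0,t)$ in $\mathbb T_n$. At $t=t_0$ we have $C_{t_0}=\tilde C_{t_0}=G_{t_0}=\tilde G_{t_0}=I$, so all four statements hold trivially. For the inductive step across an edge $t\frac{k}{\quad \;}t'$, the central structural input is the sign-skew-symmetry of $B_t$: since $\tilde b_{jk,t}=-b_{kj,t}$ always shares its sign with $b_{jk,t}$, the positions at which the expressions $[b_{jk,t}]_+$ and $[-b_{jk,t}]_+$ appearing in the mutation formulas (\ref{ccmut}) and (\ref{gmut}) are nonzero agree with those of their dual counterparts.

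I would first verify (i) and (ii) directly from the mutation rules. The $k$-th column of $C_{t'}$ and of $\tilde C_{t'}$ both negate, preserving sign equality in direction $k$; for $j\neq k$, the sign of the correction added to the $j$-th column is governed by $\varepsilon_k(C_t)$ and the sign of $b_{jk,t}$, and by the inductive hypothesis combined with sign-skew-symmetry, the correction added to the corresponding column of $\tilde C_{t'}$ has the same sign. The sign-coherence of $c$-vectors established in \cite{li2022polytope} then forces matching column signs at $t'$. An entirely parallel argument using (\ref{gmut}) and the first duality (\ref{firstduality}) yields (ii) for rows of $G$-matrices.

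With (i) and (ii) in place, I would next prove (iii) by expanding $\tilde G_{t'}^T C_{t'}$ via the mutation formulas for $\tilde G$ and $C$, and checking that the correction terms cancel by combining the inductive identity $\tilde G_t^T C_t=I$ with sign-skew-symmetry of $B_t$ and the sign-matchings from (i) and (ii). This is exactly the cancellation argument carried out in full generality in \cite{pan2023mutation}, generalizing the skew-symmetrizable proof of \cite{nakanishi2012duality}. Part (iv) is then a direct corollary: if the $i$-th column of $C_t$ equals $\epsilon e_j$, then $\tilde G_t^T C_t=I$ forces the $j$-th row of $\tilde G_t$ to be $\epsilon e_i^T$; applying (iii) to the dual pattern (whose dual is the original) yields $G_t^T\tilde C_t=I$, from which the $i$-th column of $\tilde C_t$ must also be $\epsilon e_j$. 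The row statement for $G$-matrices is symmetric, with the roles of $C$ and $G$ interchanged.

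The hard part is the inductive step for (iii) outside the skew-symmetrizable world: there is no scalar diagonal matrix $D$ intertwining $B_t$ and $-B_t^T$, so the classical cancellation of \cite{nakanishi2012duality} is unavailable. The substitute is a careful entrywise analysis exploiting sign-skew-symmetry together with the inductive sign-matching from (i) and (ii), which is the technical heart of \cite{pan2023mutation}; once (iii) is obtained, (iv) and the concrete sign-matching consequences fall out by elementary linear algebra.
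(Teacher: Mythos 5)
Your inductive step for (i) contains a genuine gap, and it sits exactly where the real difficulty of the lemma lies. Across an edge $t\frac{k}{\quad\;}t'$, the mutated $j$-th column ($j\neq k$) is $c_{j,t'}=c_{j,t}+[\,\cdot\,]_+\,c_{k,t}$, where the coefficient is the positive part of $\pm b_{jk,t}$ (sign dictated by $\varepsilon_k(C_t)$), and dually $\tilde c_{j,t'}=\tilde c_{j,t}+[\,\cdot\,]_+\,\tilde c_{k,t}$ with coefficient built from $\tilde b_{jk,t}=-b_{kj,t}$. You correctly note that, by induction and sign-skew-symmetry, the two correction terms vanish simultaneously and, when nonzero, both carry the sign $\varepsilon_k$. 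But when $\varepsilon_j(C_t)=-\varepsilon_k(C_t)$, the new column is a sum of two sign-coherent vectors of \emph{opposite} signs, and which sign wins is decided by magnitudes, not signs: the coefficient is $|b_{jk,t}|$ on one side and $|b_{kj,t}|$ on the other, and the vectors $c_{j,t},c_{k,t}$ versus $\tilde c_{j,t},\tilde c_{k,t}$ differ entrywise. Sign-coherence guarantees each resulting column is sign-coherent, but nothing in your argument forces the two signs to agree --- and that agreement \emph{is} statement (i). This is precisely why (i) was a conjecture, proved for mutation-acyclic matrices in \cite{liu2024acyclic} and in full generality in \cite{pan2023mutation}: in the skew-symmetrizable case the skew-symmetrizer supplies an explicit intertwiner between the two recursions (as exploited in \cite{nakanishi2012duality}), and no such intertwiner exists for a general totally sign-skew-symmetric matrix, so the statement cannot be extracted from the mutation rules by an entrywise induction.

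Note also that the paper does not prove Lemma \ref{key} at all: it attributes (i) to \cite{pan2023mutation} (and \cite{liu2024acyclic} in the mutation-acyclic case) and records that (ii), (iii), (iv) are deduced from (i) in \cite{liu2024acyclic}. Your plan inverts this logical order: you treat (i) and (ii) as elementary and outsource only the cancellation in (iii) to ``the technical heart of \cite{pan2023mutation}'', whereas in the literature (i) is the deep input and (iii) is among its corollaries. Finally, your deduction of (iv) from (iii) is not quite closed: from $\tilde G_t^TC_t=I$ and $c_{i,t}=\epsilon e_j$ you get that the $j$-th row of $\tilde G_t$ is $\epsilon e_i^T$, and from the dual second duality $G_t^T\tilde C_t=I$ you would need the $j$-th row of $G_t$ to be $\pm e_i^T$ --- which is the other half of (iv); statement (ii) only gives you its sign, so an extra unimodularity argument (as in the proof of Lemma \ref{differhemis}) is needed there. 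A correct blind treatment of this lemma should either reproduce the unfolding/polytope machinery or simply cite \cite{liu2024acyclic,pan2023mutation} for (i) and then derive (ii)--(iv).
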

 
 Lemma \ref{key} (i) was conjectured for acyclic sign-skew-symmetric matrices and proved for mutation-acyclic sign-skew-symmetric matrices in \cite{liu2024acyclic} and was proved in full generality in \cite{pan2023mutation}.  It was also proved in \cite{liu2024acyclic} that Lemma \ref{key} (ii), (iii), and (iv) are corollaries of (i). 
 
 Let $A=(a_{ij})_{n\times n}$ be a matrix. For any $j\in[1,n]$, we denote $[A]_+^{j\bullet}$ the $n\times n$ matrix such that the only nonzero row is the $j$-th row which is given by $([a_{k1}]_+, [a_{k2}]_+,\dots,[a_{kn}]_+)$, and we denote $J_j$ the $n\times n$ diagonal matrix whose diagonal entries are all $1$'s, except for $-1$ in the $j$-th position.
 
 Thanks to the sign-coherence of $C$-matrices and $G$-matrices, the mutations of $C$-matrices can be given in the matrix form as follows.  Let $(B_t,C_t^{t_0},G_t^{t_0})_{t\in \mathbb T_n}$ be a matrix pattern at the initial vertex $t_0$ and let $t\frac{k}{\quad\quad }t'$ be an arbitrary edge on $\mathbb T_n$, then $C_t^{t_0}$ and $C_{t'}^{t_0}$ are related by the following rule:
	\begin{equation}
	\label{cmut}  C^{t_0}_{t'} = C_t^{t_0}(J_k + [\varepsilon_k(C^{t_0}_t)B_{t}]_+^{k\bullet}). 
\end{equation}

Assume there is an edge $t_0\frac{j}{\quad\quad }t_1$ on $\mathbb T_n$.  Let $(B_t,C_t^{t_1},G_t^{t_1})_{t\in \mathbb T_n}$ be the matrix pattern at the initial vertex $t_1$. Then for any $t\in \mathbb T_n$, $C_t^{t_0}$ and $C_{t}^{t_1}$ are related by the following rule:
\begin{equation}\label{dualmut}  C^{t_1}_{t} = (J_j + [-\varepsilon_j(G^{t_0}_t)B_{t_0}]_+^{j\bullet})C_t^{t_0}.  \end{equation}

The rule (\ref{cmut}) follows immediately from the mutation rule (\ref{ccmut}) and the sign-coherence of $C$-matrices, and the rule (\ref{dualmut}) was proved by Nakanishi and Zelevinsky for skew-symmetrizable matrices in \cite{nakanishi2012duality}.  For totally sign-skew-symmetric matrices, it was shown that the rule (\ref{dualmut}) is a result of Lemma \ref{key} (i) in \cite{liu2024acyclic}. 

\subsection{Reddening sequences and maximal green sequences} In the rest of this subsection, we assume that $(B_t,C_t^{t_0},G_t^{t_0})_{t\in \mathbb T_n}$ is a matrix pattern at the initial vertex $t_0$. Thanks to the column sign-coherent property of $C$-matrices, we could distinguish the sign of each column index of a $C$-matrix in the following way. 
\begin{definition} For $t\in \mathbb T_n$,
	 a column index $j\in [1,n]$ is called {\em green} (respectively, {\em red}) of $C^{t_0}_t$  if $\varepsilon_j(C^{t_0}_t) =1$ (respectively, $\varepsilon_j(C^{t_0}_t) =-1$).
\end{definition}

Every column index of a $C$-matrix is either green or red, and all column indices of the initial $C$-matrix $C_{t_0}^{t_0}$ are green. 

\begin{definition}
	Let $(i_1,i_2,\dots,i_m)$ be a sequence such that $i_k\in [1,n]$ for all $1\leq k\leq m$, and let
	\[         (B_{t_k}, C^{t_0}_{t_k}, G^{t_0}_{t_k}) := \mu_{i_k}\dots\mu_{i_2}\mu_{i_1} (B_{t_0}, C^{t_0}_{t_0}, G^{t_0}_{t_0}),\quad k\in [1,m].                   \]
We say $(i_1,i_2,\dots,i_m)$ is a {\em reddening sequence of $B_{t_0}$} if all the column indices of $C^{t_0}_{t_m}$ are red. Denote $c^{t_0}_{i_{k+1}, t_{k}}$ the $i_{k+1}$-th column of $C^{t_0}_{t_{k}}$ for $0\leq k\leq m-1$. 	We call the sequence $c^{t_0}_{i_{k+1}, t_{k}}, 0\leq k\leq m-1$, the {\em associated sequence of $c$-vectors of the reddening sequence $(i_1,i_2,\dots,i_m)$.}
		We call the reddening sequence $(i_1,i_2,\dots,i_m)$ a {\em $r$-reddening sequence},
	if there are exactly $r$ $c$-vectors in $\{ c^{t_0}_{i_{k+1}, t_{k}} \,|\,  0\leq k\leq m-1\}$ having the negative sign. 
We call the $0$-reddening sequence a {\em maximal green sequence}.	
\end{definition}

Naturally, $r$-reddening sequences of $B_{t_0}$ one-to-one correspond to paths from $t_0$ to the vertex $t_0^-$, whose assigned $C$-matrix equals to $-I_n$ up to a permutation, admitting $r$ opposite arrows in the exchange graph.

Even though it is difficult to determine whether a totally sign-skew-symmetric matrix admits a reddening sequence or a maximal green sequence, there are lots of examples of skew-symmetrizable matrices having a reddening sequence or a maximal green sequence in literature.  Here we illustrate an example of a non-skew-symmetrizable matrix that has reddening sequences and maximal green sequences as follows.

\begin{example}
Let $B$ be the matrix given by
\[      B= \begin{pmatrix}
	0&1&2\\-1&0&1\\-1&-1&0
\end{pmatrix} .  \]
The matrix $B$ is totally sign-skew-symmetric and non-skew-symmetrizable. It is easy to check that 
\begin{enumerate}
\item[(i)] $(3,2,1)$ and $(2,3,1,2,1,2)$ are maximal green sequences of $B$, and
\item[(ii)] $(1,2,1,2,1,3,1,2)$ is a $2$-reddening sequence of $B$ with the associated sequence of $c$-vectors given by 
\[ \begin{pmatrix}
	1\\0\\0
\end{pmatrix},\quad    \begin{pmatrix}
1\\1\\0
\end{pmatrix},\quad  \begin{pmatrix}
0\\1\\0
\end{pmatrix},\quad   \begin{pmatrix}
-1\\0\\0
\end{pmatrix},\quad\begin{pmatrix}
0\\-1\\0
\end{pmatrix},\quad \begin{pmatrix}
0\\0\\1
\end{pmatrix},\quad \begin{pmatrix}
0\\1\\0
\end{pmatrix},\quad \begin{pmatrix}
1\\0\\0
\end{pmatrix}.     \]
\end{enumerate}
\end{example}

For an $n\times n$ totally sign-skew-symmetric matrix $B$, Br\"ustle, Dupont, and P$\mathrm{\acute{e}}$rotin proved that the length of a maximal green sequence is not strictly less than $n$ in \cite{brustle2014mgs}. Here we provide a proof by using the properties of the matrix pattern.

\begin{lemma}
	Let $(B_t,C_t^{t_0},G_t^{t_0})_{t\in \mathbb T_n}$ be a matrix pattern at the initial vertex $t_0$ and $(i_1,\dots,i_m)$ a  reddening sequence of $B_{t_0}$. Then $\{i_1,\dots, i_m\} = \{1,2,\dots,n\}$, \ie, every index $j\in [1,n]$ appears at least once in a reddening sequence.  In particular, $m\geq n$.
\end{lemma}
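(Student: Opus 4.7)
The plan is to argue by contradiction: suppose some index $j\in[1,n]$ is missing from $(i_1,\ldots,i_m)$, and I will show this forces the $(j,j)$-entry of $C^{t_0}_{t_m}$ to equal $1$, contradicting the hypothesis that its $j$-th column is red and therefore non-positive.

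The first move is to pass to the dual matrix pattern $(\tilde B_t,\tilde C_t,\tilde G_t)_{t\in\mathbb T_n}$ based at the same initial vertex $t_0$ and to track the $j$-th column of $\tilde G$ along the sequence. Inspecting the $G$-matrix mutation rule (\ref{gmut}) (applied to $(\tilde B_t,\tilde C_t,\tilde G_t)$), one sees that only the $k$-th column of $\tilde G$ is altered when one mutates in direction $k$; every other column is preserved verbatim. Since $j$ never appears in $(i_1,\ldots,i_m)$, the $j$-th column $\tilde g^{t_0}_{j,t_m}$ therefore stays equal to its initial value $e_j$.

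The second move invokes the second duality $\tilde G_{t_m}^T\, C^{t_0}_{t_m}=I$ from Lemma~\ref{key}(iii). The $j$-th row of $\tilde G_{t_m}^T$ is the transpose of $\tilde g^{t_0}_{j,t_m}=e_j$, hence the $j$-th row of the product $\tilde G_{t_m}^T C^{t_0}_{t_m}$ equals the $j$-th row of $C^{t_0}_{t_m}$. Comparing with the $j$-th row of $I$, this forces the $j$-th row of $C^{t_0}_{t_m}$ to be $e_j^T$; in particular $(C^{t_0}_{t_m})_{jj}=1$. Since the $j$-th column of $C^{t_0}_{t_m}$ is red, all of its entries—including the $(j,j)$-entry—must be non-positive, a contradiction. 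Therefore $\{i_1,\ldots,i_m\}=[1,n]$, and in particular $m\geq n$. The only genuinely substantive input is Lemma~\ref{key}(iii), the second duality in the totally sign-skew-symmetric setting; once this is on the table, the remainder is a routine unwinding of the two mutation rules, so there is no real obstacle beyond having this duality available.
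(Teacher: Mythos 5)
Your proposal is correct and follows essentially the same route as the paper: both track the $j$-th column of the dual $G$-matrix (which stays $e_j$ by the mutation rule (\ref{gmut}) since $j$ never appears), then apply the second duality (\ref{secdual}) to force the $j$-th row of $C^{t_0}_{t_m}$ to be $e_j^T$, contradicting the non-positivity of the final $C$-matrix. There is no substantive difference from the paper's argument.
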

\begin{proof}  We depict the reddening sequence $(i_1,\dots,i_m)$  on $\mathbb T_n$ as follows.
		\[  t_0\frac{i_1}{\quad\quad}   t_1\frac{i_2}{\quad\quad}\;  \dots\;  \frac{i_m}{\quad\quad}   t_{m}. \]
	Let $(\tilde B_t,\tilde C_t^{t_0},\tilde G_t^{t_0})_{t\in \mathbb T_n}$ be the corresponding dual matrix pattern.
	If there is an index $j\in [1,n]$ such that $j\neq i_s$ for any $s\in [1,m]$, then the $j$-th column of $\tilde G_{t_m}^{t_0}$ is $e_j$ by the mutation rule (\ref{gmut}), here $e_j$ is the $j$-th column of the identity matrix. Thanks to the second duality (\ref{secdual}), we know the $j$-th row of $C_{t_m}^{t_0}$ is $e^T_j$, which contradicts to the fact that $C_{t_m}^{t_0}$ is non-positive.
\end{proof}

The following lemma is well-known, see Appendix 1 in \cite{brustle2014mgs}.

\begin{lemma}\label{ranktwo}
	Let $B = \begin{pmatrix}
	  0& a\\
	  -b&0
	\end{pmatrix}$ with $a,b >0$ and $ab\geq 4$. Then there is no maximal green sequence of $B$ beginning with the index $1$.
\end{lemma}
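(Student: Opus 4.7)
The plan is to argue by contradiction. Suppose $(1 = i_1, i_2, \ldots, i_m)$ is a maximal green sequence of $B$. The first step is to show that this sequence is forced to be $(1, 2, 1, 2, \ldots)$. Applying the mutation rule (\ref{cmut}) to $C_{t_0} = I$, I will directly compute
\[
C_{t_1} = \begin{pmatrix} -1 & a \\ 0 & 1 \end{pmatrix}, \qquad C_{t_2} = \begin{pmatrix} ab-1 & -a \\ b & -1 \end{pmatrix}.
\]
Since $ab \geq 4$, the column sign-patterns of $C_{t_1}$ and $C_{t_2}$ are (red, green) and (green, red) respectively, so the greenness condition forces $i_2 = 2$ and $i_3 = 1$. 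A routine induction — using sign-coherence of $c$-vectors and the fact that $B_{t_k}$ oscillates between $B$ and $-B$ — then shows $i_k = 1$ for odd $k$ and $i_k = 2$ for even $k$.

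Next, I encode the alternation into a two-step matrix recurrence. The mutation matrices at each step are $M_1 = \begin{pmatrix} -1 & a \\ 0 & 1 \end{pmatrix}$ (for $\mu_1$ with $B_{t_k} = B$) and $M_2 = \begin{pmatrix} 1 & 0 \\ b & -1 \end{pmatrix}$ (for $\mu_2$ with $B_{t_k} = -B$), and their product is
\[
N := M_1 M_2 = \begin{pmatrix} ab-1 & -a \\ b & -1 \end{pmatrix},
\]
so $C_{t_{2j}} = N^j$ and $C_{t_{2j+1}} = N^j M_1$ for all $j \geq 0$. One checks that $\det N = 1$ and $\mathrm{tr}(N) = ab - 2 \geq 2$, so the eigenvalues $\lambda_\pm$ of $N$ are positive real numbers satisfying $\lambda_+ \lambda_- = 1$ and $\lambda_+ \geq 1$. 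By AM-GM (or by direct computation in the Jordan case $ab = 4$, where $\lambda = 1$ is a double eigenvalue), $\mathrm{tr}(N^j) = \lambda_+^j + \lambda_-^j \geq 2$ for every $j \geq 1$.

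The final step compares traces. Termination of a maximal green sequence requires $C_{t_m}$ to equal $-P$ for some $2\times 2$ permutation matrix $P$, i.e., $C_{t_m} \in \{-I, -\sigma\}$ where $\sigma = \begin{pmatrix} 0 & 1 \\ 1 & 0 \end{pmatrix}$; these two matrices have traces $-2$ and $0$ respectively. For even $m = 2j$ this directly contradicts $\mathrm{tr}(N^j) \geq 2$. For odd $m = 2j+1$, I will use the involution $M_1^{-1} = M_1$ to rewrite $N^j M_1 = -P$ as $N^j = -P M_1$ and then compute $\mathrm{tr}(-I \cdot M_1) = 0$ and $\mathrm{tr}(-\sigma \cdot M_1) = -a$, both strictly less than $2$. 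Either parity yields a contradiction, so no maximal green sequence of $B$ starting with $1$ exists.

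The main technical obstacle is the inductive verification in the first step that the sequence is truly forced to alternate: at each $C_{t_k}$ one must confirm, via sign-coherence and the explicit form of the mutation matrices, that exactly one column is green and it is the one dictated by the alternation, so that no branching in the MGS is possible. Once this is established, packaging the two-step dynamics as $N$ and invoking the elementary trace inequality delivers the contradiction in a uniform and conceptual way.
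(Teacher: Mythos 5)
Your proof is correct, and it is worth pointing out that the paper itself does not prove Lemma \ref{ranktwo}: it only cites Appendix 1 of \cite{brustle2014mgs}, where the standard argument runs an entrywise induction showing that along the forced alternating sequence $1,2,1,2,\dots$ the $C$-matrix always keeps exactly one green column (its entries grow), so it can never become non-positive. Your route differs at the decisive step: after packaging the two green mutation matrices into $N=M_1M_2$, you rule out termination by the determinant/trace computation $\det N=1$, $\mathrm{tr}(N)=ab-2\geq 2$, hence $\mathrm{tr}(N^j)\geq 2$ for all $j$, whereas $-P$ and $-PM_1$ have traces in $\{-2,0,-a\}$; this replaces the entrywise growth estimates by a short linear-algebra argument, and I checked the computations ($M_1^2=I$, the eigenvalue discussion including the non-diagonalizable case $ab=4$, and the trace values) are all correct. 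Two remarks. First, the step you single out as the main obstacle is lighter than you think: in a green sequence one can never mutate twice in a row at the same index, because the freshly mutated column is the negative of a green column and hence red; so in rank $2$ the alternation is forced with no computation, and the greenness of each mutated column is part of the maximal-green hypothesis you are contradicting. Thus the quantitative induction that exactly one column of $C_{t_k}$ is green is not needed for your contradiction (it is true, and with the strengthened hypothesis $ax_j\geq 2u_j$, $ay_j\geq 2v_j$ it does close using $ab\geq 4$ — but proving it would already finish the proof without the trace step, since then $C_{t_m}$ could never be non-positive, which is essentially the cited argument of \cite{brustle2014mgs}). Second, you correctly invoke that termination of a reddening sequence forces $C_{t_m}=-P$ for a permutation matrix $P$ (Lemma \ref{per}); mere non-positivity of $C_{t_m}$ would not suffice for the odd-length trace comparison, so this appeal is genuinely needed and well placed.
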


\subsection{Conjugation of reddening sequences}
In \cite{muller}, Muller introduced the conjugation of a reddening sequence for an arbitrary quiver and then showed that it induces a reddening sequence with respect to the initial quiver $Q$ to one with respect to the quiver $\mu_k(Q)$, which therefore leads to the mutation invariance of the existence of reddening sequences. Here we recall its definition with respect to a totally sign-skew-symmetric matrix.

Let $B$ be a totally sign-skew-symmetric matrix and $k$ belongs to the index set of $B$. For a reddening sequence $(i_1,\dots,i_m)$ of $B$ with the associated permutation $\sigma$, we call $(k,i_1,\dots,i_m,\sigma^{-1}(k))$ the \textit{conjugation} of $(i_1,\dots,i_m)$ in direction $k$.

\section{Conjugation of reddening sequences and the Rotation Lemma}\label{section: conjugation}
In this section, we prove the Conjugation Lemma and the Rotation Lemma for totally sign-skew-symmetric matrices. 

As in the previous section, for a matrix pattern  $(B_t,C^{t_0}_t,G^{t_0}_t)_{t\in \mathbb T_n}$, let $(\tilde B_t, \tilde C^{t_0}_t, \tilde G^{t_0}_t)_{t\in \mathbb T_n}$ be the dual matrix pattern at the initial vertex $t_0$ such that $\tilde B_{t_0} = -B_{t_0}^T$. For any permutation $\sigma \in S_n$, we denote $P_{\sigma}$ the permutation matrix associated to $\sigma$, more precisely, the $j$-th column of $P_{\sigma}$ is $e_{\sigma(j)}$, where $e_j$ is the $j$-th column of the identity matrix for all $j\in [1,n]$.

Before proving the main results, we need some preparations. The following lemma is easy.

\begin{lemma}\label{linearalg}
	Let $A=(a_{ij})_{n\times n}$ be a matrix, let $P_{\sigma}$ be the permutation matrix associated to a permutation $\sigma\in S_n$ and let $J_j$ be the $n\times n$ diagonal matrix whose diagonal entries are all $1'$s, except for $-1$ in the $j$-th position. Then the following equations hold
	\[ (J_j+[A]_+^{j\bullet} )^2=I,\quad        P_{\sigma}^TJ_jP_{\sigma} = J_{\sigma^{-1}(j)}, \quad \text{and}\quad   P_{\sigma}^T[A]_+^{j\bullet}P_{\sigma} = [P_{\sigma}^TAP_{\sigma}]_+^{\sigma^{-1}(j)\bullet} .         \]  
\end{lemma}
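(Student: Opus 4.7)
The plan is to verify the three identities by direct entry-wise computation, exploiting the sparsity of the matrices involved: $J_j$ coincides with the identity except at position $(j,j)$, and $[A]_+^{j\bullet}$ is supported on its $j$-th row alone, so each matrix product reduces to only a handful of nonzero terms.

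For the first identity, set $M := J_j + [A]_+^{j\bullet}$. Since every row of $M$ other than the $j$-th coincides with the corresponding row of $I$, for $i \neq j$ one immediately has $(M^2)_{is} = M_{is} = \delta_{is}$. For $i = j$ I expand $(M^2)_{js} = \sum_r M_{jr} M_{rs}$ and split off the $r = j$ term; using $M_{rs} = \delta_{rs}$ for $r \neq j$, the sum collapses to $(M_{jj}+1)\, M_{js}$ when $s \neq j$ and to $M_{jj}^2$ when $s = j$. With $M_{jj} = -1$ (automatic under the convention $[a_{jj}]_+ = 0$ that holds in every application of this lemma, where $A$ is a $B$-matrix with vanishing diagonal), both expressions equal the corresponding entry of $I$, giving $M^2 = I$.

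For the second and third identities I record once the elementary rule that for any $n \times n$ matrix $X$ and any $\sigma \in S_n$, $(P_\sigma^T X P_\sigma)_{ab} = X_{\sigma(a), \sigma(b)}$, which is immediate from $(P_\sigma)_{ij} = \delta_{i, \sigma(j)}$. Applying this with $X = J_j$, the right-hand side equals $\delta_{ab}$ multiplied by $-1$ exactly when $\sigma(a) = j$, \ie when $a = \sigma^{-1}(j)$, which is exactly $J_{\sigma^{-1}(j)}$. Applying it with $X = [A]_+^{j\bullet}$, the entry vanishes unless $\sigma(a) = j$, and in that case equals $[a_{j,\sigma(b)}]_+$; on the other hand, the $\sigma^{-1}(j)$-th row of $P_\sigma^T A P_\sigma$ has $b$-th entry $a_{j,\sigma(b)}$, so its positive part is the same matrix. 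No step here presents a real obstacle --- the whole lemma is a bookkeeping exercise in how permutation conjugation permutes indices and how the sparsity of $J_j$ and $[A]_+^{j\bullet}$ trivializes the matrix products.
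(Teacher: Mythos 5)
Your proof is correct and is exactly the direct entry-wise verification the paper has in mind (the paper states the lemma without proof, calling it easy). You were also right to flag that the first identity genuinely requires $[a_{jj}]_+=0$, i.e.\ $a_{jj}\le 0$: for an arbitrary matrix with $a_{jj}>0$ one gets $(M^2)_{jj}=([a_{jj}]_+-1)^2$ and nonzero off-diagonal terms $[a_{jj}]_+[a_{js}]_+$ in the $j$-th row, so the identity fails; in the paper this hypothesis is silently satisfied because every matrix $A$ to which the lemma is applied is a sign multiple (or permutation conjugate) of a sign-skew-symmetric exchange matrix, whose diagonal vanishes.
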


\begin{lemma}[\cite{ding2014nonnegative}]\label{ding}
	A real matrix and its inverse are non-negative if and only if it is the product of a diagonal matrix with all positive diagonal entries and a permutation matrix.
\end{lemma}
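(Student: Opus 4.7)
The plan is to prove the two implications separately. The forward direction is immediate: if $A=DP$ with $D$ a diagonal matrix with positive diagonal entries and $P$ a permutation matrix, then $A$ is non-negative, and $A^{-1}=P^{-1}D^{-1}=P^{T}D^{-1}$ is again the product of a permutation matrix with a positive diagonal matrix, hence non-negative.

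For the converse, I would start by writing $A=(a_{ij})$ and $A^{-1}=(b_{ij})$ and exploit the identity $AA^{-1}=I$ entrywise. For every pair $i\neq j$, the relation $\sum_{k}a_{ik}b_{kj}=0$ together with the hypothesis $a_{ik},b_{kj}\ge 0$ forces each summand to vanish, so $a_{ik}b_{kj}=0$ for all $k$ whenever $i\neq j$. This bilinear vanishing is the engine that drives the whole argument.

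Next, I would deduce that each column of $A$ has exactly one nonzero entry. Fix a column index $k$. Since $A^{-1}$ is invertible, its $k$-th row is nonzero, so some entry $b_{k,j_{0}}>0$. Applying the vanishing above with this $j_{0}$ and any $i\neq j_{0}$ gives $a_{ik}=0$, so the only possibly nonzero entry in column $k$ of $A$ is $a_{j_{0},k}$. The invertibility of $A$ guarantees this entry is nonzero, and by the non-negativity of $A$ it is strictly positive. Furthermore, two distinct columns cannot share the same privileged row, because then those two columns would be proportional and $A$ would be singular; hence the assignment $k\mapsto j_{0}(k)=:\pi(k)$ defines a permutation $\pi\in S_{n}$. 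Setting $d_{k}:=a_{\pi(k),k}>0$ yields $A=P_{\pi}D$ with $D=\operatorname{diag}(d_{1},\dots,d_{n})$, and equivalently $A=D'P_{\pi}$ where $D'=P_{\pi}DP_{\pi}^{-1}$ is again a positive diagonal matrix, which is the desired factorization.

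I do not anticipate any serious obstacle; the one point worth stating carefully rather than as an afterthought is that the invertibility of $A^{-1}$ (guaranteeing a nonzero entry $b_{k,j_{0}}$ in every row $k$) is precisely what rules out a column of $A$ having two nonzero entries, so the proof needs both $A$ and $A^{-1}$ to be invertible, not just $A$.
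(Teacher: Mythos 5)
Your argument is correct and complete. The paper does not actually prove this statement---it is quoted from \cite{ding2014nonnegative} without proof---and your entrywise argument from $AA^{-1}=I$ (each off-diagonal relation $\sum_k a_{ik}b_{kj}=0$ with non-negative summands forcing $a_{ik}b_{kj}=0$, hence each column of $A$ supported in a single row, with injectivity of the row assignment from invertibility) is the standard elementary proof of the cited result, including the correct handling of the final rewriting $P_{\pi}D=\bigl(P_{\pi}DP_{\pi}^{-1}\bigr)P_{\pi}$ to reach the stated form ``positive diagonal times permutation.''
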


The following result, extending [Proposition 2.10,  \cite{brustle2014mgs}] and [Lemma 2.2.1, \cite{brustle2017semipicture}] to totally sign-skew-symmetric cluster algebras, implies that a reddenning sequence makes the $C$-matrix and $G$-matrix to the same negative permutation matrix.

\begin{lemma}\label{per}  Let $t\in \mathbb T_n$. The following results hold:
	\begin{enumerate}
		\item[(i)] $C^{t_0}_t$ is non-positive if and only if $\tilde G^{t_0}_t$ is non-positive;
		\item[(ii)] if $C^{t_0}_t$ is non-positive, then there is a permutation matrix $P$ such that 
		\[          C^{t_0}_t = G^{t_0}_t = \tilde G^{t_0}_t = \tilde C^{t_0}_t = -P.       \]
	\end{enumerate}
\end{lemma}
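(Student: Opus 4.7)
The plan is to prove (ii) first, from which both directions of (i) fall out; the key tool throughout is the second duality $\tilde G_t^T C_t = I$ from Lemma \ref{key}(iii), which identifies $\tilde G_t^T$ with $C_t^{-1}$ and, since both sides are integer matrices, forces $\det C_t = \pm 1$. Assuming $C_t$ is non-positive, I would first establish that $\tilde G_t$ is also non-positive. Let $u_j$ denote the $j$-th column of $C_t^{-1} = \tilde G_t^T$. The row sign-coherence of $\tilde G_t$ (the standing sign-coherence axiom for the dual pattern, compatible with Lemma \ref{key}(ii)) says that $u_j$ is either entrywise $\geq 0$ or entrywise $\leq 0$. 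The case $u_j \geq 0$ would contradict $C_t u_j = e_j$, because $C_t \leq 0$ forces $C_t u_j \leq 0$ whereas $e_j$ has a positive $j$-th entry. Hence $u_j \leq 0$ for every $j$, so $\tilde G_t \leq 0$.

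Now both $-C_t$ and its inverse $(-C_t)^{-1} = -\tilde G_t^T$ are non-negative integer matrices, so Lemma \ref{ding} applies and $-C_t = DP$ for a positive diagonal matrix $D$ and a permutation matrix $P$. Integrality of the entries together with $\det(-C_t) = \pm 1$ forces $D = I$, so $-C_t = P$, and correspondingly $\tilde G_t^T = -P^T$, i.e.\ $\tilde G_t = -P$.

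To finish (ii), I would propagate the identity $C_t = -P$ to $\tilde C_t$ and $G_t$ using Lemma \ref{key}(i), (ii), (iv). Each column of $C_t$ has the form $-e_{\sigma(j)}$ where $\sigma$ is the permutation associated with $P$, so by (iv) each column of $\tilde C_t$ is $\pm e_{\sigma(j)}$, and (i) pins the sign to minus, giving $\tilde C_t = -P$; the row-version of (iv) together with (ii) applied to $\tilde G_t = -P$ similarly yields $G_t = -P$. For the remaining direction of (i), the dual matrix pattern carries its own second duality $G_t^T \tilde C_t = I$, so running the same argument on it shows that $\tilde G_t \leq 0$ implies $\tilde C_t \leq 0$, whence $C_t \leq 0$ by Lemma \ref{key}(i). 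The delicate step, and the one I would want to write out most carefully, is the very first reduction: it must use the sign-coherence of the columns of $\tilde G_t^T$ before any structural information on $\tilde G_t$ has been obtained, which is legitimate precisely because sign-coherence is a standing property of every matrix pattern rather than a consequence of the non-positivity hypothesis.
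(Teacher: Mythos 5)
Your proof is correct in substance and follows essentially the same route as the paper: the second duality $\tilde{G}_t^T C_t=I$ plus row sign-coherence gives the forward implication of (i), Lemma \ref{ding} together with integrality (your determinant argument $\det(-C^{t_0}_t)=\pm1$ is a harmless variant of the paper's use of integrality of both $DP$ and $P^TD^{-1}$) pins down $C^{t_0}_t=\tilde{G}^{t_0}_t=-P$, and Lemma \ref{key} propagates this to $G^{t_0}_t$ and $\tilde{C}^{t_0}_t$. The one step that is off as written is the converse of (i): the duality you invoke, $G_t^T\tilde{C}_t=I$, does not involve $\tilde{G}_t$ at all, so "running the same argument on it" does not by itself yield $\tilde{G}^{t_0}_t\leq 0\Rightarrow \tilde{C}^{t_0}_t\leq 0$; you would first need $\tilde{G}^{t_0}_t\leq 0\Rightarrow G^{t_0}_t\leq 0$ from Lemma \ref{key}(ii) before applying the argument to $G_t^T\tilde{C}_t=I$ and the column sign-coherence of $\tilde{C}^{t_0}_t$. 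The simpler fix, which is what the paper does, is to apply your own forward argument directly to the original duality $\tilde{G}_t^T C_t=I$: if $\tilde{G}^{t_0}_t\leq 0$ then each column $c_j$ of $C^{t_0}_t$ satisfies $\tilde{G}_t^T c_j=e_j$, so $c_j\geq 0$ is impossible, and column sign-coherence of $C^{t_0}_t$ gives $C^{t_0}_t\leq 0$ with no detour through the dual pattern.
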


\begin{proof}
(i) Assume that $C^{t_0}_t$ is non-positive. By Lemma \ref{key} (iii), we have that $C^{t_0}_t(\tilde G^{t_0}_t)^T = I$. For any $j\in [1,n]$, there must be a negative entry in the $j$-th column of $(\tilde G^{t_0}_t)^T$. By the row-sign-coherence of $\tilde G^{t_0}_t$, every column vector of $(\tilde G^{t_0}_t)^T$ is sign-coherent and hence every column vector of  $(\tilde G^{t_0}_t)^T$ is  non-positive. So $\tilde G^{t_0}_t$ is non-positive. The converse proof is similar by the second duality $(\tilde G^{t_0}_t)^TC^{t_0}_t=I$
and the column-sign-coherence of $C^{t_0}_t$.

(ii) Since $C^{t_0}_t$ is non-positive, we have that $(\tilde G^{t_0}_t)^T$ is also non-positive by (i). Therefore $-C^{t_0}_t$ and its inverse $-(\tilde G^{\,t_0}_t)^T$ are non-negative matrices, and by Lemma \ref{ding}, $-C^{t_0}_t$ is a product of a diagonal matrix $D$ with all positive diagonal entries and a permutation matrix $P$. Since $-C^{t_0}_t = DP$ and  $-(\tilde G^{t_0}_t)^T=P^TD^{-1}$ are integer matrices, we have that $C^{t_0}_t=\tilde G^{t_0}_t=-P$. 

By Lemma \ref{key} (iv), we have that $G^{t_0}_t = \tilde C^{t_0}_t = -P.$
\end{proof}

Since the last $C$-matrix in a reddening sequence is non-negative, by Lemma \ref{per}, there is a permutation $\sigma$ such that the last  $C$-matrix is $-P_{\sigma}$. We call this permutation $\sigma$ the {\em associated permutation of the reddening sequence.}

Let $j\in [1,n]$. We define the two disjoint sets as follows:
\[  H_j^+:=    \{  C^{t_0}_t \;  |\;   \varepsilon_j(\tilde G^{t_0}_t)  =1,  \;t\in \mathbb T_n     \}, \quad \text{and} \quad H_j^- :=    \{  C^{t_0}_t \;  |\;   \varepsilon_j(\tilde G^{t_0}_t)  =-1  , \;t\in \mathbb T_n    \}.    \]
Clearly, $H^+_j \cup H_j^-$ is the set $\{  C^{t_0}_t ,\;t\in \mathbb T_n \} $ consisting of all $C$-matrices. We call $H^+_j$ and $H_j^-$ the {\em $j$-semispheres}.

The following result shows when two adjacent $C$-matrices in a matrix pattern belong to different $j$-semispheres.
\begin{lemma}\label{differhemis}
	Let $(B_t,C_t^{t_0},G_t^{t_0})_{t\in \mathbb T_n}$ be a matrix pattern at the initial vertex $t_0$. Let  $j\in[1,n]$.  Then for any edge $t\frac{k}{\quad\quad }t'$ on $\mathbb T_n$, the following statements hold:
	\begin{enumerate}
		\item[(i)] $C^{t_0}_t \in H_j^+$ and $C^{t_0}_{t'} \in H_j^-$ if and only if $c^{t_0}_{k,t}=e_j$ if and only if $c^{t_0}_{k,t'}=-e_j$;
		\item[(ii)] $C^{t_0}_t \in H_j^-$ and $C^{t_0}_{t'} \in H_j^+$ if and only if $c^{t_0}_{k,t}=-e_j$ if and only if $c^{t_0}_{k,t'}=e_j$;
		\item[(iii)] $C^{t_0}_t$ and $C^{t_0}_{t'}$ are in the different $j$-hemispheres if and only if $c^{t_0}_{k,t}=\pm e_j$.
	\end{enumerate}
\end{lemma}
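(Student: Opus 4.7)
My plan is to translate the hemisphere condition into a constraint on the $j$-th row of $\tilde G^{t_0}_t$ via the definition of $\varepsilon_j(\tilde G^{t_0}_t)$, and then transfer back to the $c$-vector $c^{t_0}_{k,t}$ using the second duality $(\tilde G^{t_0}_t)^T C^{t_0}_t = I$ of Lemma \ref{key}(iii) (equivalently $C^{t_0}_t (\tilde G^{t_0}_t)^T = I$, since both matrices are square and invertible).

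The key preliminary step is to show that $\tilde G^{t_0}_{t'}$ and $\tilde G^{t_0}_t$ coincide on every column except the $k$-th, so their $j$-th rows differ at most in the $k$-th entry. I would obtain this by inverting the $C$-matrix mutation rule (\ref{cmut}) using the involution identity $(J_k + [A]_+^{k\bullet})^2 = I$ from Lemma \ref{linearalg}, and then transposing via the second duality to get
\[
\tilde G^{t_0}_{t'} \;=\; \tilde G^{t_0}_t\,\bigl(J_k + [\varepsilon_k(C^{t_0}_t)B_t]_+^{k\bullet}\bigr)^{T},
\]
whose right-hand factor agrees with $I$ outside its $k$-th column by direct inspection.

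For the forward direction of (i), assume $C^{t_0}_t \in H_j^+$ and $C^{t_0}_{t'} \in H_j^-$, so the $j$-th rows of $\tilde G^{t_0}_t$ and $\tilde G^{t_0}_{t'}$ are non-negative and non-positive respectively. Since they agree outside position $k$, every non-$k$ entry is simultaneously non-negative and non-positive, hence zero; so each row is an integer multiple of $e_k^T$, and invertibility of $\tilde G^{t_0}_t$ rules out zero and forces the multiple $\alpha$ on the $t$-side to be a positive integer. Applying $C^{t_0}_t (\tilde G^{t_0}_t)^T = I$ to $e_j$ then gives $\alpha\, c^{t_0}_{k,t} = e_j$, so by integrality $\alpha = 1$ and $c^{t_0}_{k,t} = e_j$. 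Conversely, if $c^{t_0}_{k,t} = e_j$, the same duality forces the $j$-th row of $\tilde G^{t_0}_t$ to equal $e_k^T$, giving $C^{t_0}_t \in H_j^+$; and (\ref{cmut}) gives $c^{t_0}_{k,t'} = -e_j$, so the analogous argument yields $C^{t_0}_{t'} \in H_j^-$. The remaining equivalence $c^{t_0}_{k,t} = e_j \Leftrightarrow c^{t_0}_{k,t'} = -e_j$ is just the $k$-th column sign flip in (\ref{cmut}).

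Part (ii) follows from the symmetric argument with the roles of non-negative and non-positive exchanged, and (iii) is the disjunction of (i) and (ii). The main obstacle I anticipate is the column-equality observation for $\tilde G$ under mutation in direction $k$; once this is in hand, the rest is a short chase through the second duality.
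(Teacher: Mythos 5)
Your argument is correct and follows essentially the same route as the paper's proof: translate membership in $H_j^{\pm}$ into a statement about the $j$-th row of $\tilde G^{t_0}_t$, use that $\tilde G^{t_0}_t$ and $\tilde G^{t_0}_{t'}$ agree outside their $k$-th columns, and pass back to $c^{t_0}_{k,t}$ through the second duality (\ref{secdual}), with (\ref{cmut}) giving the sign flip $c^{t_0}_{k,t'}=-c^{t_0}_{k,t}$. The only cosmetic differences are that the paper obtains the column-agreement of the dual $G$-matrices directly from the mutation rule (\ref{gmut}) applied to the dual pattern rather than by inverting (\ref{cmut}), and it pins the $j$-th row down to $\pm e_k^T$ by unimodularity where you use integrality of $c^{t_0}_{k,t}$ in $\alpha\, c^{t_0}_{k,t}=e_j$; both are valid.
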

\begin{proof}
It follows directly from (\ref{cmut}) that $c^{t_0}_{k,t}=\pm e_j$ if and only if $c^{t_0}_{k,t'}=\mp e_j$. It follows from the second duality (\ref{secdual}) that $c^{t_0}_{k,t}=\pm e_j$  if and only if  the $j$-th row of $\tilde G^{t_0}_t$ is $\pm e^T_k$. Hence if $c^{t_0}_{k,t}=\pm e_j$ and $c^{t_0}_{k,t'}=\mp e_j$, then $C_t^{t_0}\in H_j^{\pm}$ and  $C_{t'}^{t_0}\in H_j^{\mp}$ . 

Conversely, if $C_t^{t_0}\in H_j^{+}$ and $C^{t_0}_{t'} \in H_j^-$, this implies that $\varepsilon_j(\tilde G^{t_0}_t)=1$ and $\varepsilon_j(\tilde G^{t_0}_{t'})=-1$. Note that $\tilde G^{t_0}_t$ and $\tilde G^{t_0}_{t'}$ only differs in the $k$-th columns. Thanks to the  unimodularity  and the  row-sign-coherence of $G$-matrices, the $j$-th rows of $\tilde G^{t_0}_t$ and $\tilde G^{t_0}_{t'}$ are $e^T_k$ and $-e^T_k$ respectively, and thus $c^{t_0}_{k,t}=e_j$. Similarly, if $C_t^{t_0}\in H_j^{-}$ and $C^{t_0}_{t'} \in H_j^+$, then  $c^{t_0}_{k,t}=-e_j$.
\end{proof}

As a result of Lemma \ref{differhemis}, we show that in a reddening sequence we should mutate at the $c$-vector $e_j$ once more than that at the $c$-vector $-e_j$ for any $j\in[1,n]$.

\begin{corollary}\label{lll}
	Let $(B_t,C_t^{t_0},G_t^{t_0})_{t\in \mathbb T_n}$ be a matrix pattern at the initial vertex $t_0$ and $(i_1,\dots,i_m)$ a reddening sequence of $B_{t_0}$ with the associated sequence of $c$-vectors $c_0,\dots,c_{m-1}$. Let $j\in [1,n]$. Then the number of $c$-vectors equal to $-e_j$ in $c_0,\dots,c_{m-1}$ is $l$ if and only if	 the number of $c$-vectors equal to $e_j$ in $c_0,\dots,c_{m-1}$ is $l+1$. 
	
	If in addition, $(i_1,\dots,i_m)$ is a maximal green sequence, then for any $j\in [1,n]$, $e_j$ appears exactly once in the associated sequence of $c$-vectors $c_0,\dots,c_{m-1}$. 
\end{corollary}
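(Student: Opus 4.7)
The plan is to fix an index $j \in [1,n]$ and track which $j$-semisphere the $C$-matrix lies in as we traverse the path $t_0 \frac{i_1}{\quad\quad} t_1 \frac{i_2}{\quad\quad} \cdots \frac{i_m}{\quad\quad} t_m$ corresponding to the reddening sequence. First I would identify the two endpoints: since $C^{t_0}_{t_0} = I$, the dual initial $G$-matrix $\tilde G^{t_0}_{t_0}$ is also $I$, so its $j$-th row is $e_j^T$ with $\varepsilon_j(\tilde G^{t_0}_{t_0}) = 1$, placing $C^{t_0}_{t_0}$ in $H_j^+$. At the terminal vertex, $C^{t_0}_{t_m}$ is non-positive by definition of a reddening sequence, so Lemma \ref{per} gives $\tilde G^{t_0}_{t_m} = -P_{\sigma}$ for the associated permutation $\sigma$; every row of $\tilde G^{t_0}_{t_m}$ is non-positive, in particular $\varepsilon_j(\tilde G^{t_0}_{t_m}) = -1$, so $C^{t_0}_{t_m} \in H_j^-$.

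Next I would apply Lemma \ref{differhemis} edge-by-edge. For each $k \in [0, m-1]$, the $C$-matrices $C^{t_0}_{t_k}$ and $C^{t_0}_{t_{k+1}}$ lie in different $j$-semispheres if and only if $c_k = c^{t_0}_{i_{k+1}, t_k} = \pm e_j$, and moreover $c_k = e_j$ forces the transition $H_j^+ \to H_j^-$ while $c_k = -e_j$ forces $H_j^- \to H_j^+$. Let $a_j$ denote the number of $c$-vectors in $c_0, \dots, c_{m-1}$ equal to $e_j$, and $b_j$ the number equal to $-e_j$. The sequence of memberships $C^{t_0}_{t_0}, C^{t_0}_{t_1}, \dots, C^{t_0}_{t_m}$ in $\{H_j^+, H_j^-\}$ begins with $H_j^+$ and ends with $H_j^-$, and the only steps where membership changes are the $a_j + b_j$ indices where $c_k = \pm e_j$, with transitions alternating between the two semispheres. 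Consequently the number of $+ \to -$ transitions exceeds the number of $- \to +$ transitions by exactly one, giving $a_j - b_j = 1$. This immediately yields the equivalence: $b_j = l$ if and only if $a_j = l+1$.

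For the final assertion on maximal green sequences, the associated sequence of $c$-vectors contains no $c$-vector with negative sign by definition, so in particular $-e_j$ never occurs and $b_j = 0$. Hence $a_j = 1$, meaning $e_j$ appears exactly once in $c_0, \dots, c_{m-1}$ for every $j \in [1,n]$. There is no real obstacle here beyond organising the hemisphere bookkeeping cleanly; all of the substantive content is already packaged in Lemma \ref{per} and Lemma \ref{differhemis}, together with the observation that the reddening endpoint forces simultaneous membership in $H_j^-$ for every $j$.
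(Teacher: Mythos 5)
Your proposal is correct and follows essentially the same route as the paper: identifying $C^{t_0}_{t_0}\in H_j^+$ and $C^{t_0}_{t_m}\in H_j^-$, then using Lemma \ref{differhemis} edge-by-edge so that semisphere changes occur exactly at $c_k=\pm e_j$ with the prescribed directions, whence the count of $e_j$'s exceeds that of $-e_j$'s by one. Your write-up merely makes the endpoint identification (via Lemma \ref{per}) and the alternation bookkeeping more explicit than the paper does.
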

\begin{proof}
Let us depict the reddening sequence $(i_1,\dots,i_m)$ on $\mathbb T_n$ as follows.
	\[  
(B_{t_0} ,C^{t_0}_{t_0},G^{t_0}_{t_0})
\frac{i_1}{\quad\quad} (B_{t_1} ,C^{t_0}_{t_1},G^{t_0}_{t_1})\frac{i_2}{\quad\quad}\;  \dots\;  \frac{i_m}{\quad\quad}  (B_{t_m} ,C^{t_0}_{t_m},G^{t_0}_{t_m}).\]
In this case, $c_k= c^{t_0}_{i_{k+1},t_{k}}$ for $0\leq k\leq m-1$.
 
Let $j\in [1,n]$.  Thanks to Lemma \ref{differhemis}, for $0\leq k\leq m-1$, the adjacent $C$-matrices $C_{t_k}^{t_0}$ and $C_{t_{k+1}}^{t_0}$ belong to the same $j$-hemisphere if and only if $c_k \neq  \pm e_j$, and if $c_k = e_j$, then $C_{t_k}^{t_0}\in H_j^+$ and $C_{t_{k+1}}^{t_0}\in H_k^-$, and if $c_k = -e_j$, then $C_{t_k}^{t_0}\in H_j^-$ and $C_{t_{k+1}}^{t_0}\in H_k^+$.  Since $C^{t_0}_{t_0}\in H_j^+$ and $C^{t_0}_{t_m}\in H_j^-$, the number of $c$-vectors equal to $e_j$ is exactly one more than  the number of $c$-vectors equal to $e_j$ in $c_0,\dots,c_{m-1}$. 
\end{proof}

The following lemma shows how the signs of $c$-vectors are changed under the changing of the initial seed. It turns out that all the signs remain but those of $c$-vectors equal to $\pm e_j$ when the changing is realized as the mutation in direction $j$.
\begin{lemma}\label{kk3}
	Let $(B_t,C_t^{t_0},G_t^{t_0})_{t\in \mathbb T_n}$ be a matrix pattern at the initial vertex $t_0$, let $t_0\frac{j}{\quad\;}t_1$ be the edge labeled by $j$ in $\mathbb T_n$, and let $(B_t,C_t^{t_1},G_t^{t_1})_{t\in \mathbb T_n}$ be the matrix pattern at the initial vertex $t_1$. For any $t\in \mathbb T_n$ and $k\in [1,n]$, let $c^{t_0}_{k,t}$ and $c^{t_1}_{k,t}$ be the $k$-th columns of $C^{t_0}_{t}$ and $C^{t_1}_{t}$, respectively. Then 
	\begin{enumerate}
	\item[(i)]  $c^{t_0}_{k,t} =  e_j$ if and only if $c^{t_1}_{k,t} =  -e_j$ ;
	\item[(ii)] $c^{t_0}_{k,t} =  -e_j$ if and only if $c^{t_1}_{k,t} =  e_j$;
	\item[(iii)] $\sgn(c^{t_0}_{k,t}) = - \sgn(c^{t_1}_{k,t})$ if and only if $c^{t_0}_{k,t} = \pm e_j$.
\end{enumerate}
\end{lemma}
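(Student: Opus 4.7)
The main tool will be the mutation formula (\ref{dualmut}),
$$C_t^{t_1} = M_t\cdot C_t^{t_0}, \qquad M_t := J_j + [-\varepsilon_j(G^{t_0}_t)B_{t_0}]_+^{j\bullet},$$
together with the observation that $M_t$ agrees with the identity outside its $j$-th row. For any column vector $v$ one then has $(M_t v)_i = v_i$ for every $i\neq j$, while $(M_t v)_j = -v_j + \sum_{l}[-\varepsilon_j(G^{t_0}_t)b_{jl,t_0}]_+\,v_l$. Moreover, Lemma \ref{linearalg} gives $M_t^2 = I$, so the same matrix realizes the reverse change of initial seed, $C_t^{t_0} = M_t\cdot C_t^{t_1}$.

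Parts (i) and (ii) would then follow by a one-line calculation. Substituting $v=\pm e_j$ and using that $B_{t_0}$ is sign-skew-symmetric (so $b_{jj,t_0}=0$), one obtains $M_t(\pm e_j) = \mp e_j$. This yields the forward directions of (i) and (ii); the converse directions are immediate from $M_t^{-1}=M_t$. The ``if'' direction of (iii) is then an immediate consequence of (i) and (ii).

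For the ``only if'' direction of (iii) I would argue by contraposition: assuming $c^{t_0}_{k,t} \neq \pm e_j$, show that $c^{t_0}_{k,t}$ and $c^{t_1}_{k,t}=M_t\,c^{t_0}_{k,t}$ share the same sign. If $c^{t_0}_{k,t}$ has a nonzero entry in some row $i\neq j$, then that entry is preserved by $M_t$, so the column sign-coherence of $C_t^{t_1}$ forces both $c$-vectors to share the sign of that entry. The remaining case, in which $c^{t_0}_{k,t}=c\,e_j$ for some nonzero integer $c$, is the subtle one: I must rule out $|c|\geq 2$. Here I will apply the second duality $(\tilde G^{t_0}_t)^T C_t^{t_0}=I$ of Lemma \ref{key}(iii) to the $k$-th column, deducing that the $j$-th row of $\tilde G^{t_0}_t$ equals $(1/c)\,e_k^T$; integrality of $\tilde G^{t_0}_t$ then forces $c=\pm 1$, contradicting the assumption.

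The main obstacle is precisely this last sub-case, and its resolution depends crucially on the recent extension of the second duality to totally sign-skew-symmetric matrices (Lemma \ref{key}(iii), \cite{pan2023mutation}); without it, (iii) could conceivably fail. All other steps are elementary matrix manipulations using the explicit shape of $M_t$.
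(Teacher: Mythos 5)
Your proposal is correct and follows essentially the same route as the paper: apply the dual mutation rule (\ref{dualmut}), note that only the $j$-th row changes so that $\pm e_j$ columns flip while any column with a nonzero entry off the $j$-th coordinate keeps its sign by sign-coherence, and exclude columns of the form $c\,e_j$ with $|c|\ge 2$ by integrality. The only cosmetic difference is that you invoke the second duality of Lemma \ref{key}(iii) for this last exclusion where the paper simply cites unimodularity of the $C$-matrix, which is the same fact in substance.
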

\begin{proof}  
    Clearly  by (\ref{dualmut}), $c^{t_0}_{k,t} =  \pm e_j  \Longleftrightarrow c^{t_1}_{k,t} =  \mp e_j$. This proves (i) and (ii).
    
	The equation (\ref{dualmut}) also shows that $C^{t_0}_{t}$ and $C^{t_1}_{t}$ only differ in the $j$-th rows, and hence  $c^{t_0}_{k,t}$ and $c^{t_1}_{k,t}$ only differ in the $j$-th coordinates. If there exists some $i\neq j$ such that the $i$-th coordinate of 
	$c^{t_0}_{k,t}$ is nonzero, then $\sgn(c^{t_0}_{k,t}) = \sgn(c^{t_1}_{k,t})$ by the sign-coherence of $c$-vectors. Therefore $\sgn(c^{t_0}_{k,t}) = -\sgn(c^{t_1}_{k,t})$ implies that every $i$-th coordinate of 
	$c^{t_0}_{k,t}$ is zero for each $i\neq j$ and then  $c^{t_0}_{k,t} = \pm e_j$ by the unimodularity. The converse proof of (iii) follows from (i) and (ii).
\end{proof}

Now we are ready to analyze how conjugation of a reddening sequence acts on signs of the corresponding $c$-vector sequence.
\begin{theorem}[Conjugation Lemma]\label{conj}
  Let $(B_t,C_t^{t_0},G_t^{t_0})_{t\in \mathbb T_n}$ be a matrix pattern at the initial vertex $t_0$, $(i_1,\dots,i_m)$ be an $r$-reddening sequence of $B_{t_0}$ with the associated permutation $\sigma$ and $t_0\frac{j}{\quad\;}t'$ be the edge labeled by $j$ in $\mathbb T_n$.  Then the conjugation of $(i_1,\dots,i_m)$ in direction $j$ is an $(r+1)$-reddening sequence of $B_{t'}=\mu_{j}(B_{t_0})$ with the associated permutation $\sigma$.  
\end{theorem}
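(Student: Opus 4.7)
The plan is to track the $C$-matrix along the path
\[
t' \xrightarrow{\;j\;} t_0 \xrightarrow{\;i_1\;} t_1 \xrightarrow{\;i_2\;} \cdots \xrightarrow{\;i_m\;} t_m \xrightarrow{\;\sigma^{-1}(j)\;} t_{m+1}
\]
in $\mathbb{T}_n$ determined by the conjugation, using Lemma \ref{kk3} as the principal bridge between the matrix patterns with initial vertices $t_0$ and $t'$. By hypothesis and Lemma \ref{per}, $C_{t_m}^{t_0} = -P_\sigma$. Three things must be checked: (I) that $C_{t_{m+1}}^{t'}$ is non-positive, so the conjugation is a reddening sequence of $B_{t'}$; (II) that its associated permutation equals $\sigma$; and (III) that exactly $r+1$ of the $m+2$ mutations in the conjugation are red.

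For (I) and (II), I would first note that the $\sigma^{-1}(j)$-th column of $C_{t_m}^{t_0}$ is $-e_j$, so Lemma \ref{kk3}(ii) gives $c^{t'}_{\sigma^{-1}(j), t_m} = e_j$; the final mutation at $\sigma^{-1}(j)$ via (\ref{cmut}) then sends this column to $-e_j$, yielding $c^{t'}_{\sigma^{-1}(j), t_{m+1}} = -e_j$. For the other columns $k \neq \sigma^{-1}(j)$, direct application of (\ref{cmut}) to $-P_\sigma$ produces
\[
c^{t_0}_{k, t_{m+1}} = -e_{\sigma(k)} - [-b_{\sigma^{-1}(j), k, t_m}]_+ \, e_j,
\]
which is non-positive and distinct from $\pm e_j$ because $\sigma(k) \neq j$. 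Lemma \ref{kk3}(iii) then forces $c^{t'}_{k, t_{m+1}}$ to share its negative sign, so $C_{t_{m+1}}^{t'}$ is non-positive, settling (I). For (II), Lemma \ref{per} supplies a permutation $\sigma'$ with $C_{t_{m+1}}^{t'} = -P_{\sigma'}$, and the $\sigma^{-1}(j)$-th column being $-e_j$ gives $\sigma'(\sigma^{-1}(j)) = j$. For $k \neq \sigma^{-1}(j)$, the $\sigma(k)$-th coordinate of $c^{t_0}_{k, t_{m+1}}$ equals $-1$ with $\sigma(k) \neq j$, and since (\ref{dualmut}) alters only the $j$-th row when passing from $C_{t_{m+1}}^{t_0}$ to $C_{t_{m+1}}^{t'}$, this $-1$ is preserved, forcing $\sigma'(k) = \sigma(k)$ and thus $\sigma' = \sigma$.

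For (III), the initial mutation at $j$ from $t'$ is green because $C_{t'}^{t'} = I$, and the terminal mutation at $\sigma^{-1}(j)$ from $t_m$ is green since $c^{t'}_{\sigma^{-1}(j), t_m} = e_j$ by (I). For the $m$ middle mutations indexed by $(i_1, \dots, i_m)$, Lemma \ref{kk3}(iii) says the sign of the associated $c$-vector flips between the $t_0$- and $t'$-patterns precisely when the $t_0$-vector equals $\pm e_j$; more precisely, by Lemma \ref{kk3}(i)--(ii), every $e_j$ in the $t_0$-pattern becomes $-e_j$ in the $t'$-pattern and vice versa. By Corollary \ref{lll}, the original associated sequence contains some integer $l \geq 0$ copies of $-e_j$ and $l+1$ copies of $e_j$; under conjugation the $l$ red $-e_j$'s become green $e_j$'s while the $l+1$ green $e_j$'s become red $-e_j$'s, so the middle gains a net $+1$ red mutation. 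Combined with the two green endpoints, the total number of red mutations is $r+1$. I expect the main obstacle to be the delicate interaction in (I) between $C_{t_{m+1}}^{t_0}$ (which has one green column, so is not itself a reddening $C$-matrix) and $C_{t_{m+1}}^{t'}$; the success of the argument hinges on the fact that the sole green column of $C_{t_{m+1}}^{t_0}$ is precisely $e_j$, allowing Lemma \ref{kk3} to convert it into the required $-e_j$ of $C_{t_{m+1}}^{t'}$.
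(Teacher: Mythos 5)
Your proposal is correct: all three parts (non-positivity of $C^{t'}_{t_{m+1}}$, identification of the permutation, and the count $r+1$) go through, and part (III) is essentially identical to the paper's counting argument via Corollary \ref{lll} and Lemma \ref{kk3}. Where you diverge is the key identity $C^{t'}_{t_{m+1}}=-P_\sigma$. The paper proves it by a single matrix computation: it uses the first duality (\ref{firstduality}) to write $B_{t_m}=P_\sigma^TB_{t_0}P_\sigma$, then combines (\ref{cmut}), (\ref{dualmut}) and the conjugation identities of Lemma \ref{linearalg} to get
\[
C^{t'}_{t_{m+1}}=(J_j+[B_{t_0}]_+^{j\bullet})\,C^{t_0}_{t_m}\,(J_{\sigma^{-1}(j)}+[P_\sigma^TB_{t_0}P_\sigma]_+^{\sigma^{-1}(j)\bullet})=-P_\sigma
\]
in one stroke, which yields the reddening property and the associated permutation simultaneously. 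You instead continue the $t_0$-pattern one step past $t_m$, compute $c^{t_0}_{k,t_{m+1}}=-e_{\sigma(k)}-[-b_{\sigma^{-1}(j),k,t_m}]_+e_j$ columnwise, transfer signs to the $t'$-pattern via Lemma \ref{kk3}(iii) and the fact that (\ref{dualmut}) only changes the $j$-th row, and then invoke Lemma \ref{per} a second time (in the pattern based at $t'$) to conclude $C^{t'}_{t_{m+1}}=-P_{\sigma'}$ before pinning down $\sigma'=\sigma$ from the preserved $-1$ entries. Your route avoids the first duality and the permutation-conjugation bookkeeping of Lemma \ref{linearalg}, at the cost of an extra appeal to Lemma \ref{per} (hence Lemma \ref{ding}) and a slightly longer case analysis; the paper's route is more computational but delivers the exact matrix, not just its sign pattern, without needing the endpoint lemma again. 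Both are valid, and your sign-coherence/columnwise argument is a legitimate, somewhat more elementary alternative for this step.
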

\begin{proof}
    Let $\sigma^{-1}(j)=k$.	We depict the path on $\mathbb T_n$ associated with the conjugation of $(i_1,\dots,i_m)$ in direction $j$ as follows.
	\[  t'\frac{j}{\quad\quad}   t_0\frac{i_1}{\quad\quad}\;  \dots\;  \frac{i_m}{\quad\quad}   t_{m}\frac{ k}{\quad\quad }  t_{m+1}  \]
	
	For the matrix patterns $(B_t,C_t^{t_0},G_t^{t_0})_{t\in \mathbb T_n}$ and $(B_t,C_t^{t'},G_t^{t'})_{t\in \mathbb T_n}$, we respectively depict the sequences of matrices as follows. They are
		\[  
		(B_{t'} ,C^{t_0}_{t'},G^{t_0}_{t'})
		 \frac{j}{\quad\quad} (B_{t_0} ,C^{t_0}_{t_0},G^{t_0}_{t_0})\frac{i_1}{\quad\quad}\;  \dots\;  \frac{i_m}{\quad\quad}  (B_{t_m} ,C^{t_0}_{t_m},G^{t_0}_{t_m})\frac{k}{\quad\quad } (B_{t_{m+1}} ,C^{t_0}_{t_{m+1}},G^{t_{0}}_{t_{m+1}})\]
	and 
	\[  
      (B_{t'} ,C^{t'}_{t'},G^{t'}_{t'})
       \frac{j}{\quad\quad} (B_{t_0} ,C^{t'}_{t_0},G^{t'}_{t_0})\frac{i_1}{\quad\quad}\;  \dots\;  \frac{i_m}{\quad\quad}  (B_{t_m} ,C^{t'}_{t_m},G^{t'}_{t_m})\frac{k}{\quad\quad } (B_{t_{m+1}} ,C^{t'}_{t_{m+1}},G^{t'}_{t_{m+1}}),\]
	here $C^{t_0}_{t_0} = C^{t'}_{t'}= I$. 
	
	By Lemma \ref{per}, $C^{t_0}_{t_m} =G^{t_0}_{t_m}= -P_{\sigma}$, and we need to show that  $C^{t'}_{t_{m+1}} = -P_{\sigma}$.  By the first duality (\ref{firstduality}), we have that
   \begin{equation}   B_{t_m} = (G^{t_0}_{t_m})^{-1}B_{t_0} C^{t_0}_{t_m} =    P_{\sigma}^TB_{t_0}P_{\sigma}.  \end{equation}
	
	 Since $C^{t_0}_{t_m} =G^{t_0}_{t_m}= -P_{\sigma}$ are non-positive matrices, then $\varepsilon_i(C^{t_0}_{t_m} ) =  \varepsilon_i(G^{t_0}_{t_m} ) =-1$ for every $i\in [1,n]$.
	 
	 Since the $k$-th column of $C^{t_0}_{t_{m}}$ is $-e_j$, then the $k$-th column of $C^{t'}_{t_{m}}$ is $e_j$ by Lemma \ref{kk3} (ii) and hence $\varepsilon_{k}(C^{t'}_{t_{m}} ) = 1$. 
  
  Combining (\ref{cmut}), (\ref{dualmut}) and Lemma \ref{linearalg}, we have that
        \begin{equation} \begin{split}
            C^{t'}_{t_{m+1}}  & =C^{t'}_{t_{m}} (J_k+[\varepsilon_{k}                      (C^{t'}_{t_{m}})B_{t_m}]_+^{k\bullet})\\
	                          &=(J_j + [-\varepsilon_{j}(G^{t_0}_{t_{m}} )B_{t_0}]_+^{j\bullet})	C^{t_0}_{t_{m}}  (J_k  +  [\varepsilon_{k}(C^{t'}_{t_{m}} )B_{t_m}]_+^{k\bullet})\\  
	                          &=-(J_j + [B_{t_0}]_+^{j\bullet})	P_{\sigma} (J_{\sigma^{-1}(j)}  +  [P_{\sigma}^TB_{t_0}P_{\sigma}]_+^{\sigma^{-1}(j)\bullet})\\  
	                          &=  -(J_j + [B_{t_0}]_+^{j\bullet})	P_{\sigma} (P_{\sigma}^T J_{j} P_{\sigma}  +  P_{\sigma}^T[B_{t_0}]_+^{j\bullet}P_{\sigma})\\
	                         &=   -(J_j + [B_{t_0}]_+^{j\bullet})	P_{\sigma} P_{\sigma}^T (J_j    +              [B_{t_0}]_+^{j\bullet})P_{\sigma}\\
	                         &=-P_{\sigma}.
	             	\end{split}
        \end{equation}               
This proves $(j,i_1,\dots,i_m,\sigma^{-1}(j))$
is a reddening sequence of $B_{t'}=\mu_j(B)$ with the associated permutation $\sigma$.

 Denote $c_0,\dots,c_{m-1}$ and $c', c'_0, \dots, c'_{m-1}, c'_m$ the associated sequences of $c$-vectors of the reddening sequences of $B_{t_0}$ and $B_{t'}$ respectively. We have proved $c'_m=e_j=c'$. Assume that  the number of $c$-vectors equal to $e_j$ in $c_0,\dots,c_{m-1}$ is $l$, then the number of $c$-vectors equal to $-e_j$ in $c_0,\dots,c_{m-1}$ is $l-1$, thanks to Corollary \ref{lll}.  By Lemma \ref{kk3}, for each $0\leq i\leq m-1$, $c_i$ and $c'_i$ have the following relations:
 \begin{enumerate}
 	\item[(1)] $c_i=\pm e_j \Longleftrightarrow c'_i= \mp e_j$, thus $e_j$ appears exactly $l-1$ times in $c'_0,\dots,c'_{m-1}$, and $-e_j$ appears exactly $l$ times in $c'_0,\dots,c'_{m-1}$; 
 	\item[(2)] $c_i\neq \pm e_j \Longleftrightarrow \sgn(c_i)= \sgn(c'_i)$.
 \end{enumerate}
 This proves $(j,i_1,\dots,i_m,\sigma^{-1}(j))$
 is an $(r+1)$-reddening sequence of $B_{t'}$.
\end{proof}

An immediate corollary of the Conjugation Lemma shows the existence of reddening sequences is mutation invariant.

\begin{corollary}
   Let $(B_t,C_t^{t_0},G_t^{t_0})_{t\in \mathbb T_n}$ be a matrix pattern at the initial vertex $t_0$. If $B_{t_0}$ admits a reddening sequence, then so does $B_t$ for any $t\in \mathbb T_n$. 
\end{corollary}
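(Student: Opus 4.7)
The plan is to deduce this directly from the Conjugation Lemma (Theorem \ref{conj}) by induction on the graph distance $d(t_0,t)$ in the tree $\mathbb T_n$. The base case $d=0$ is trivial: the hypothesis is exactly the conclusion at $t=t_0$. For the inductive step, I would take a vertex $t$ at distance $d+1$ from $t_0$, and choose its unique neighbor $t''$ on the geodesic from $t_0$ to $t$, so that $d(t_0,t'')=d$ and we have an edge $t''\frac{j}{\quad\;}t$ for some label $j\in[1,n]$.

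The key observation is that the Conjugation Lemma, as stated, applies to any matrix pattern and its initial vertex: it is not tied to $t_0$ specifically. So I may regard $t''$ itself as the initial vertex of a matrix pattern $(B_s, C_s^{t''}, G_s^{t''})_{s\in\mathbb T_n}$. By the inductive hypothesis, $B_{t''}$ admits a reddening sequence, say an $r$-reddening sequence $(i_1,\dots,i_m)$ with associated permutation $\sigma$. Applying the Conjugation Lemma to this matrix pattern and the edge $t''\frac{j}{\quad\;}t$, the conjugation $(j,i_1,\dots,i_m,\sigma^{-1}(j))$ is an $(r+1)$-reddening sequence of $B_t=\mu_j(B_{t''})$, which completes the induction.

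There is no real obstacle here; the only point to check is that the Conjugation Lemma is insensitive to which vertex we designate as the initial one, a fact implicit in the definition of matrix pattern since any vertex $t''\in\mathbb T_n$ carries an initial seed $(B_{t''},I,I)$ of its own matrix pattern. As a byproduct, the induction tracks the number of red mutations: at each step the count increases by one, so if $B_{t_0}$ admits an $r$-reddening sequence and $d(t_0,t)=d$, then $B_t$ admits an $(r+d)$-reddening sequence, which foreshadows the conjugation difference discussed in Section \ref{section: potential}.
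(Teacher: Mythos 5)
Your proposal is correct and is exactly the argument the paper has in mind: the paper states this as an immediate corollary of the Conjugation Lemma, and the intended justification is precisely the induction along the path from $t_0$ to $t$, re-basing the matrix pattern at each intermediate vertex and conjugating once per edge. Your remark that each step adds one red mutation (so $B_t$ admits an $(r+d)$-reddening sequence when $d=d(t_0,t)$) is a correct refinement consistent with the conjugation difference of Section \ref{section: potential}.
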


In particular, when $j$ equals $i_1$ in Theorem \ref{conj}, we have the following rotation lemma.

\begin{corollary}[Rotation Lemma]\label{rotationlemma}
Let $(B_t,C_t^{t_0},G_t^{t_0})_{t\in \mathbb T_n}$ be a matrix pattern at the initial vertex $t_0$ and $t_0\frac{i_1}{\quad\;}t_1$ be an edge labeled by $i_1$ in $\mathbb T_n$. Then the sequence $(i_1,\dots,i_m)$ is an $r$-reddening sequence of $B_{t_0}$ with the associated permutation $\sigma$ if and only if the sequence 
\[     (i_2,\dots,i_m,\sigma^{-1}(i_1))                \]
is an $r$-reddening sequence of $B_{t_1}=\mu_{i_1}(B_{t_0})$ with the associated permutation $\sigma$.
 
 In particular, $(i_1,\dots,i_m)$ is a maximal green sequence of $B_{t_0}$ if and only if 
 \[(i_2,\dots,i_m,\sigma^{-1}(i_1))\] is a maximal green sequence of $\mu_{i_1}(B_{t_0})$.
\end{corollary}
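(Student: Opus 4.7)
The plan is to deduce the Rotation Lemma directly from the Conjugation Lemma (Theorem~\ref{conj}), using the involutivity of mutation $\mu_k\mu_k=\mathrm{id}$ to cancel a pair of consecutive identical mutations.

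For the forward direction, assume $(i_1,\ldots,i_m)$ is an $r$-reddening sequence of $B_{t_0}$ with associated permutation $\sigma$. Applying Theorem~\ref{conj} with $j=i_1$, I obtain that $(i_1,i_1,i_2,\ldots,i_m,\sigma^{-1}(i_1))$ is an $(r+1)$-reddening sequence of $B_{t_1}=\mu_{i_1}(B_{t_0})$ with the same associated permutation $\sigma$. Since $\mu_{i_1}\mu_{i_1}$ acts as the identity on seeds, the first two $i_1$-mutations starting from $(B_{t_1},I,I)$ return us to the very same seed, so the tail $(i_2,\ldots,i_m,\sigma^{-1}(i_1))$, applied to $B_{t_1}$, reaches the terminal $C$-matrix $-P_\sigma$ and is therefore a reddening sequence of $B_{t_1}$ with associated permutation $\sigma$. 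A short $c$-vector count for the two cancelled mutations, using $C_{t_1}^{t_1}=I$ together with Lemma~\ref{differhemis}, shows that they carry $c$-vectors $e_{i_1}$ (green) and $-e_{i_1}$ (red) respectively, so they jointly contribute exactly one red; hence the tail contains exactly $r$ red mutations and is an $r$-reddening sequence.

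The converse direction is symmetric. Starting from an $r$-reddening sequence $(i_2,\ldots,i_m,\sigma^{-1}(i_1))$ of $B_{t_1}$ with associated permutation $\sigma$, I conjugate in direction $i_1$ using Theorem~\ref{conj} along the edge $t_1\frac{i_1}{\quad\;}t_0$, producing an $(r+1)$-reddening sequence $(i_1,i_2,\ldots,i_m,\sigma^{-1}(i_1),\sigma^{-1}(i_1))$ of $B_{t_0}$. The last two mutations at $\sigma^{-1}(i_1)$ cancel by involutivity; an analogous $c$-vector count at the terminal $C$-matrix $-P_\sigma$ (where the $\sigma^{-1}(i_1)$-column is $-e_{i_1}$) shows that these two mutations contribute one red and one green, so removing them strips off exactly one red and yields $(i_1,\ldots,i_m)$ as an $r$-reddening sequence of $B_{t_0}$ with associated permutation $\sigma$. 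The ``in particular'' statement about maximal green sequences is simply the case $r=0$.

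The only delicate step is the $c$-vector bookkeeping for the cancelled pair of mutations, which must simultaneously track the preservation of the associated permutation and the exact parity of reds; this is handled via Lemma~\ref{differhemis} together with the observation that after cancellation the matrix pattern has been restored to its initial state, so the remaining sequence operates on an identical seed. No deeper obstacle is anticipated: the lemma is essentially a combinatorial repackaging of the Conjugation Lemma after a trivial cancellation at one end.
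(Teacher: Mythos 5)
Your proof is correct and follows exactly the route the paper intends: it deduces the Rotation Lemma from the Conjugation Lemma (Theorem~\ref{conj}) with $j=i_1$, cancelling the two consecutive mutations in the same direction via $\mu_k\mu_k=\mathrm{id}$ and checking that the cancelled pair carries $c$-vectors $e_{i_1}$ and $-e_{i_1}$ (one green, one red), so the red count drops from $r+1$ back to $r$. The paper states the corollary without written proof, so your write-up simply makes explicit the bookkeeping (including the converse direction, done symmetrically along the edge $t_1\frac{i_1}{\;\;}t_0$) that the paper leaves implicit.
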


\section{Conjugation difference}\label{section: potential}
\begin{definition}
    Let $(B_t,C_t^{t_0},G_t^{t_0})_{t\in \mathbb T_n}$ be a matrix pattern at the initial vertex $t_0$.
    \begin{enumerate}
        \item[$(1)$]  A sequence $(i_1,\dots,i_m)$ is called a \emph{greening sequence} (or a \emph{cycle sequence}) of $B_{t_0}$ if 
        \[\mu_{i_m}\dots\mu_{i_1}(C^{t_0}_{t_0})=P,\]
        where $P$ is a permutation matrix, and the permutation corresponding to $P$ is called the {\emph associated permutation of the greening sequence}.

        \item[$(2)$] An \emph{$r$-greening sequence} is a greening sequence with $r$ red mutations.

        \item [$(3)$] The \emph{conjugation} of a greening sequence $(i_1,\dots,i_m)$ with the associated permutation $\sigma$ in direction $j$ equals $$(j,i_1,\dots,i_m,\sigma^{-1}(j))$$
        for any $j\in[1,n]$.
    \end{enumerate}    
\end{definition}

The following result is a corollary of Lemma \ref{differhemis}, and its proof is similar to Corollary \ref{lll}.
\begin{lemma}
    Let $(B_t,C_t^{t_0},G_t^{t_0})_{t\in \mathbb T_n}$ be a matrix pattern at the initial vertex $t_0$, and let $(i_1,\dots,i_m)$ be a greening sequence with the associated sequence of $c$-vectors $c_0,\dots,c_{m-1}$. Let $j\in [1,n]$. Then the number of $c$-vectors equal to $-e_j$ in $c_0,\dots,c_{m-1}$ is the same as that of $c$-vectors equal to $e_j$ in $c_0,\dots,c_{m-1}$.
\end{lemma}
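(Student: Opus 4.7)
The plan is to run the same hemisphere-crossing argument used in the proof of Corollary \ref{lll}, with the only substantive change coming from the different shape of the final $C$-matrix. First I would depict the greening sequence as a path $t_0-t_1-\cdots-t_m$ in $\mathbb T_n$, with $c_k=c^{t_0}_{i_{k+1},t_k}$ for $0\le k\le m-1$, and then track on which side of the $j$-hemisphere decomposition $H_j^+\cup H_j^-$ each $C^{t_0}_{t_k}$ lies.

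The key observation is that this walk is now closed with respect to the $j$-hemisphere decomposition, i.e.\ both endpoints lie in $H_j^+$. Indeed $C^{t_0}_{t_0}=I$ and $C^{t_0}_{t_m}=P$ are both non-negative; the second duality $(\tilde G^{t_0}_t)^T C^{t_0}_t=I$ of Lemma \ref{key}(iii) then gives $(\tilde G^{t_0}_{t_m})^T = P^{-1}=P^T$, so $\tilde G^{t_0}_{t_m}=P$ itself, which is again non-negative. Hence $\varepsilon_j(\tilde G^{t_0}_{t_0})=\varepsilon_j(\tilde G^{t_0}_{t_m})=1$ for every $j\in[1,n]$, i.e.\ the walk starts and ends in $H_j^+$. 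This is exactly the point at which the argument diverges from Corollary \ref{lll}, where the walk instead ends in $H_j^-$.

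Lemma \ref{differhemis} then completes the proof: adjacent $C$-matrices $C^{t_0}_{t_k}$ and $C^{t_0}_{t_{k+1}}$ lie in different $j$-hemispheres exactly when $c_k=\pm e_j$, and the sign records the direction of the crossing ($c_k=e_j$ for $H_j^+\to H_j^-$ and $c_k=-e_j$ for $H_j^-\to H_j^+$). Because the walk starts and ends in $H_j^+$, the numbers of $+\to-$ and $-\to+$ crossings must agree, which is the claimed equality. No step looks like a genuine obstacle; the only mildly delicate point is the identification $\tilde G^{t_0}_{t_m}=P$, but this is immediate from the second duality together with the fact that the inverse-transpose of a permutation matrix is itself.
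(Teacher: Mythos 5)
Your proof is correct and is essentially the argument the paper intends: the paper proves this lemma by the same hemisphere-crossing argument as Corollary \ref{lll} via Lemma \ref{differhemis}, with the only change being that the walk now ends at a permutation matrix, hence in $H_j^+$, so the $+\to-$ and $-\to+$ crossings balance; your identification $\tilde G^{t_0}_{t_m}=P$ via the second duality is the right way to see this.
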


Similar to reddening sequences, the conjugation or rotation of a greening sequence results in another greening sequence.

\begin{proposition}
    Let $(B_t,C_t^{t_0},G_t^{t_0})_{t\in \mathbb T_n}$ be a matrix pattern at the initial vertex $t_0$, let $(i_1,\dots,i_m)$ be an $r$-greening sequence of $B_{t_0}$ with the associated permutation $\sigma$, and let $t_0\frac{j}{\quad\;}t'$ be the edge labeled by $j$ in $\mathbb T_n$. Then the conjugation of $(i_1,\dots,i_m)$ in direction $j$ is an $(r+1)$-greening sequence of $B_{t'}=\mu_{j}(B_{t_0})$ with the associated permutation $\sigma$.
\end{proposition}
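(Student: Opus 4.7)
The plan is to mirror the argument used for Theorem \ref{conj}, swapping the non-positive regime for the non-negative one. Writing $k = \sigma^{-1}(j)$, the conjugation corresponds to the path
\[
t' \frac{j}{\quad\;} t_0 \frac{i_1}{\quad\;} t_1 \cdots \frac{i_m}{\quad\;} t_m \frac{k}{\quad\;} t_{m+1},
\]
and I would track the two matrix patterns rooted at $t_0$ and $t'$ simultaneously, comparing the final column of $C^{t'}_{t_{m+1}}$ with $P_\sigma$.

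The first step is to record a non-negative analogue of Lemma \ref{per}: if $C^{t_0}_{t_m}$ equals a permutation matrix $P_\sigma$, then $G^{t_0}_{t_m} = \tilde G^{t_0}_{t_m} = \tilde C^{t_0}_{t_m} = P_\sigma$ as well. This is a straightforward sign-adjusted repetition of the proof of Lemma \ref{per}: sign-coherence together with Lemma \ref{ding} applied to $C^{t_0}_{t_m}$ and its inverse $(\tilde G^{t_0}_{t_m})^T$ (via the second duality) pins $C^{t_0}_{t_m}$ and $\tilde G^{t_0}_{t_m}$ down to the same permutation matrix $P_\sigma$, and Lemma \ref{key} (iv) then transfers the equality to $G^{t_0}_{t_m}$ and $\tilde C^{t_0}_{t_m}$. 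With this in hand, the first duality gives $B_{t_m} = P_\sigma^T B_{t_0} P_\sigma$.

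Then the main computation is parallel to that in Theorem \ref{conj}. Because the $k$-th column of $C^{t_0}_{t_m} = P_\sigma$ is $e_j$, Lemma \ref{kk3} (i) gives $c^{t'}_{k,t_m} = -e_j$, so $\varepsilon_k(C^{t'}_{t_m}) = -1$; meanwhile $\varepsilon_j(G^{t_0}_{t_m}) = +1$. Combining (\ref{cmut}), (\ref{dualmut}), and Lemma \ref{linearalg} exactly as in the proof of Theorem \ref{conj}, the product telescopes to
\[
C^{t'}_{t_{m+1}} = (J_j + [-B_{t_0}]_+^{j\bullet})\, P_\sigma\, P_\sigma^T\, (J_j + [-B_{t_0}]_+^{j\bullet})\, P_\sigma = (J_j + [-B_{t_0}]_+^{j\bullet})^2\, P_\sigma = P_\sigma,
\]
so the conjugation is a greening sequence with the same associated permutation $\sigma$.

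For the red count, the first appended mutation is at $c^{t'}_{j,t'} = e_j$, which is green, while the last appended mutation is at $c^{t'}_{k,t_m} = -e_j$, which is red. For the $m$ middle mutations, Lemma \ref{kk3} (iii) shows that the color can flip only at positions whose $t_0$-pattern $c$-vector equals $\pm e_j$; and by the preceding lemma the numbers of $c$-vectors equal to $e_j$ and to $-e_j$ coincide in a greening sequence, so the green$\to$red and red$\to$green flips cancel exactly. The middle block therefore still contributes $r$ red mutations, yielding a total of $r+1$. The only genuinely new ingredient is the non-negative analogue of Lemma \ref{per}; everything else is a transparent sign adjustment of the reddening case, so I do not anticipate any serious obstacle.
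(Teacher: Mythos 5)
Your proposal is correct and takes essentially the same route the paper intends: the paper's own proof of this proposition just says it is similar to that of Theorem \ref{conj}, and your sign-adjusted version of that argument (the non-negative analogue of Lemma \ref{per}, the identity $B_{t_m}=P_\sigma^TB_{t_0}P_\sigma$, the telescoping computation via (\ref{cmut}), (\ref{dualmut}) and Lemma \ref{linearalg}, and the red-count bookkeeping through Lemma \ref{kk3} and the greening analogue of Corollary \ref{lll}) supplies exactly the omitted details. One cosmetic remark: Lemma \ref{ding} is not actually needed, since a greening sequence makes $C^{t_0}_{t_m}$ a permutation matrix by definition, so the second duality (\ref{secdual}) immediately yields $\tilde G^{t_0}_{t_m}=P_\sigma$.
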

\begin{proof}
 The proof is similar to that of Theorem \ref{conj}.
 \end{proof}
\begin{proposition}\label{rotation lemma of greening seqs}
    Let $(B_t,C_t^{t_0},G_t^{t_0})_{t\in \mathbb T_n}$ be a matrix pattern at the initial vertex $t_0$ and $t_0\frac{i_1}{\quad\;}t_1$ be an edge labeled by $i_1$ in $\mathbb T_n$. Then the sequence $(i_1,\dots,i_m)$ is an $r$-greening sequence of $B_{t_0}$ with the associated permutation $\sigma$ if and only if the sequence 
    \[(i_2,\dots,i_m,\sigma^{-1}(i_1)) \]
    is an $r$-greening sequence of $B_{t_1}=\mu_{i_1}(B_{t_0})$ with the associated permutation $\sigma$.
\end{proposition}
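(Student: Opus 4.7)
The plan is to derive this rotation lemma for greening sequences as an immediate consequence of the preceding conjugation proposition, combined with the observation that two consecutive mutations in the same direction cancel, while contributing exactly one red mutation to the count.

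For the forward direction, suppose $(i_1,\dots,i_m)$ is an $r$-greening sequence of $B_{t_0}$ with associated permutation $\sigma$. I would first apply the conjugation lemma for greening sequences in direction $j=i_1$ to obtain the $(r+1)$-greening sequence $(i_1,i_1,i_2,\dots,i_m,\sigma^{-1}(i_1))$ of $B_{t_1}=\mu_{i_1}(B_{t_0})$ with the same associated permutation $\sigma$. Then I would analyze the first two mutations starting from the initial seed at $t_1$: since $C^{t_1}_{t_1}=I$, the $i_1$-th column is $e_{i_1}$, so the first mutation at $i_1$ is green; after this mutation, the $i_1$-th column of $C^{t_1}_{t_0}$ becomes $-e_{i_1}$, so the second mutation at $i_1$ is red and brings us back to $B_{t_1}$ with $C$-matrix $I$. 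Because $\mu_{i_1}\mu_{i_1}$ is the identity on seeds, deleting these two cancelling mutations preserves the final seed while dropping the red mutation count by exactly one. Thus $(i_2,\dots,i_m,\sigma^{-1}(i_1))$ is an $r$-greening sequence of $B_{t_1}$ with associated permutation $\sigma$.

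For the converse direction, I would run the same argument from the other end. Assuming $(i_2,\dots,i_m,\sigma^{-1}(i_1))$ is an $r$-greening sequence of $B_{t_1}$ with permutation $\sigma$, I apply the conjugation lemma in direction $i_1$ (noting $B_{t_0}=\mu_{i_1}(B_{t_1})$) to obtain the $(r+1)$-greening sequence $(i_1,i_2,\dots,i_m,\sigma^{-1}(i_1),\sigma^{-1}(i_1))$ of $B_{t_0}$ with permutation $\sigma$. Here, I would examine the last two mutations: after the initial subsequence $(i_1,i_2,\dots,i_m,\sigma^{-1}(i_1))$ the $C$-matrix equals $P_\sigma$, whose $\sigma^{-1}(i_1)$-th column... more precisely, I need to track that after $(i_1,\dots,i_m)$ the $C$-matrix is forced (by the two final cancelling mutations landing on $P_\sigma$) to equal $P_\sigma$ as well, and then the added mutations at $\sigma^{-1}(i_1)$ are first green (since the $\sigma^{-1}(i_1)$-th column of $P_\sigma$ is $e_{i_1}$) and then red. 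Deleting these two cancelling mutations yields $(i_1,\dots,i_m)$ as an $r$-greening sequence of $B_{t_0}$ with permutation $\sigma$.

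The main subtlety to carry out carefully is the bookkeeping of the red mutation count around the cancellation point: one must verify that the pair of cancelled mutations consists of exactly one green followed by one red, so that deleting them decreases the count by precisely one (rather than zero or two). This is forced in both directions because the $C$-matrix at the relevant intermediate moment is either $I$ or $P_\sigma$, both of which are non-negative matrices, so the first of the two cancelling mutations at a fixed index is necessarily green, and the ensuing sign-flip in that single column ensures the second is red.
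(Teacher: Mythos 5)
Your proposal is correct and follows essentially the paper's own route: the paper derives this rotation statement (as it does Corollary \ref{rotationlemma}) by specializing the conjugation result to the direction $j=i_1$ and cancelling the two consecutive mutations in the same direction, exactly as you do, with the same bookkeeping that the cancelled pair consists of one green and one red mutation because the $C$-matrix at the cancellation point is $I$ (resp.\ $P_\sigma$). Your explicit verification of both directions and of the red-mutation count is precisely the argument the paper leaves implicit.
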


For a vertex $t\in\mathbb T_n$, denoted by $t^-$  a vertex in $\mathbb T_n$ such that $C^{t}_{t^-}=-P$, where $P$ is a permutation matrix. 

Let $(B_t,C_t^{t_0},G_t^{t_0})_{t\in \mathbb T_n}$ be a matrix pattern at the initial vertex $t_0$ and $t\in\mathbb T_n$. Denote by $(i_1,\dots,i_m)$ the sequence of labels of edges passed through by the path from $t$ to $t_0$ in $\mathbb T_n$. Choose $t_0^-$ and $t^-$ such that $(\sigma^{-1}(i_1),\dots,\sigma^{-1}(i_m))$ equals the sequence of labels of edges passed through by the path from $t^-$ to $t_0^-$ in $\mathbb T_n$ and $\sigma(C^{t_0}_{t^-_0})=I_n$ for some permutation $\sigma$, where $\sigma$ acts on matrices by permuting indices. Therefore, the choice of $t^-_0$ uniquely determines $t^-$ and $\sigma$.

\begin{definition}
    In the above settings, denote by $\varphi^{t_0}_t$ the \emph{conjugation difference} of $t$ with respect to $t_0$, which equals the difference of the number of red mutations in $(i_1,\dots,i_m)$ from $t$ to $t_0$ and that in $(\sigma^{-1}(i_1),\dots,\sigma^{-1}(i_m))$ from $t^-$ to $t_0^-$.
\end{definition}
Different choices of $t^-_0$, say $t_1$ and $t_2$,  induces different choices of $t^-$, say $t_3$ and $t_4$ respectively, and different permutations $\sigma_1$ and $\sigma_2$ such that $\sigma_1(C_{t_1}^{t_0})=\sigma_2(C_{t_2}^{t_0})=I_n$. Hence $\sigma_1(\mu_{\sigma_1^{-1}(i_s)}\cdots\mu_{\sigma_1^{-1}(i_m)}(C_{t_1}^{t_0}))=\sigma_2(\mu_{\sigma_2^{-1}(i_s)}\cdots\mu_{\sigma_2^{-1}(i_m)}(C_{t_2}^{t_0}))$ for any $s\in[1,m]$. In particular, the numbers of red mutations in $(\sigma_1^{-1}(i_1),\dots,\sigma_1^{-1}(i_m))$ from $t_3$ to $t_1$ equals that in $(\sigma_2^{-1}(i_1),\dots,\sigma_2^{-1}(i_m))$ from $t_4$ to $t_2$. Hence $\varphi^{t_0}_t$ does not depend on the choice of $t^-_0$.

Recall that a maximal green sequence of $B_t$ is a mutation sequence from $t$ to $t^-$ admitting no red mutations in the matrix pattern with the initial vertex $t$. However, such mutation sequence may contain red mutations when we deal with another matrix pattern. The following result claims the number of red mutations is exactly $\varphi_t^{t_0}$, which only depends on $t$ and the choice of the initial vertex $t_0$.

\begin{theorem}\label{thm of conjugation difference}
    Let $(B_t, C_t^{t_0}, G_t^{t_0})_{t\in \mathbb T_n}$ be a matrix pattern at the initial vertex $t_0$, $t,t'\in\mathbb T_n$ and $(i_1,\dots,i_m)$ be the sequence of indices passed through by the path from $t$ to $t^-$ with $r$ red mutations. 
    \begin{enumerate}
        \item[$(1)$] $\varphi^{t_0}_t\leqslant r$.
        \item[$(2)$] $\varphi^{t_0}_t=\varphi^{t_0}_{t'}$ if $C_t^{t_0}=C_{t'}^{t_0}$ up to a simultaneous permutation of rows and columns.

        \item[$(3)$] $(i_1,\dots,i_m)$ is an $(r-\varphi^{t_0}_t)$-reddening sequence of $B_{t}$. In particular, $(i_1,\dots,i_m)$ is a maximal green sequence of $B_{t}$ if and only if $\varphi^{t_0}_t=r$.
    \end{enumerate}
\end{theorem}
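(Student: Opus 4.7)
The plan is to prove (3) first, from which (1) is immediate, and (2) by a parallel construction. For (3), I will build a large reddening sequence of $B_{t_0}$ whose walk in $\mathbb T_n$ passes through $t$ and $t^-$, transport it to a reddening sequence of $B_t$ using the Rotation Lemma (Corollary \ref{rotationlemma}), and collapse the extraneous mutations by cancellation. Concretely, let $(a_1,\dots,a_\ell)$ be the labels of the tree path from $t$ to $t_0$; by the definition of $\varphi^{t_0}_t$ I may choose $t_0^-$ and the associated permutation $\sigma$ so that the tree path from $t^-$ to $t_0^-$ is labelled $(\sigma^{-1}(a_1),\dots,\sigma^{-1}(a_\ell))$. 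Setting
\[
S=(a_\ell,\dots,a_1,\; i_1,\dots,i_m,\; \sigma^{-1}(a_1),\dots,\sigma^{-1}(a_\ell)),
\]
the walk $S$ from $t_0$ traces $t_0\to t\to t^-\to t_0^-$ in $\mathbb T_n$ and ends at $t_0^-$, where $C^{t_0}_{t_0^-}$ is a non-positive permutation matrix, so $S$ is a reddening sequence of $B_{t_0}$ with associated permutation $\sigma$. Because $\mu_k$ negates the $k$-th column of $C$, a forward and a reverse traversal of any tree edge contribute exactly one red mutation in total, so reversing any tree path interchanges its red and green counts. Writing $\alpha$ and $\beta$ for the numbers of red mutations along the tree paths $t\to t_0$ and $t^-\to t_0^-$ in the $t_0$-pattern, the red count of $S$ is $(\ell-\alpha)+r+\beta=\ell+r-\varphi^{t_0}_t$, using $\varphi^{t_0}_t=\alpha-\beta$.

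Next, I apply Corollary \ref{rotationlemma} exactly $\ell$ times: each rotation moves the initial vertex one step along $(a_\ell,\dots,a_1)$ while keeping the red count and associated permutation $\sigma$ constant, so after $\ell$ rotations the initial vertex is $t$ and the sequence reads
\[
S'=(i_1,\dots,i_m,\; \sigma^{-1}(a_1),\dots,\sigma^{-1}(a_\ell),\sigma^{-1}(a_\ell),\dots,\sigma^{-1}(a_1)),
\]
an $(\ell+r-\varphi^{t_0}_t)$-reddening sequence of $B_t$ whose trailing block of length $2\ell$ is a palindrome. Its innermost two entries are $\sigma^{-1}(a_\ell)$ repeated, i.e.\ two mutations at a common index performed at adjacent vertices of $\mathbb T_n$; since $\mu_k\mu_k=\mathrm{id}$ their deletion yields an equivalent mutation sequence, and by the mutation rule their $c$-vectors in the $t$-pattern are negatives of each other, so exactly one of them is red and the deletion drops the red count by one. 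Iterating this cancellation $\ell$ times wipes out the entire palindromic tail and leaves $(i_1,\dots,i_m)$ as an $(r-\varphi^{t_0}_t)$-reddening sequence of $B_t$. This proves (3); statement (1) follows at once since a red count is nonnegative, and the ``in particular'' of (3) is the special case $r-\varphi^{t_0}_t=0$.

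For (2), I apply (3) at both $t$ and $t'$. If $C^{t_0}_t$ and $C^{t_0}_{t'}$ agree up to a simultaneous row-column permutation $\pi$, then Lemma \ref{key} implies that $\tilde G^{t_0}_t$ and $\tilde G^{t_0}_{t'}$ agree under the same $\pi$, so the matrix patterns with initial vertex $t$ and initial vertex $t'$ are isomorphic via $\pi$. Choosing $t^-$ and $t'^-$ so they too correspond under $\pi$, the isomorphism identifies the reddening sequences along the tree paths $t\to t^-$ and $t'\to t'^-$ together with their red counts in the $t$- and $t'$-patterns; iteratively applying Lemma \ref{kk3} along the tree paths from $t_0$ to $t$ and to $t'$ transports this equality into the $t_0$-pattern, and the formula of (3) then yields $\varphi^{t_0}_t=\varphi^{t_0}_{t'}$.

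The main obstacle is the cancellation step in the proof of (3): I must verify that removing an adjacent identical pair from a reddening sequence decreases the red count by exactly one, not zero and not two. This hinges precisely on the identity $c^{t}_{k,\mu_k(u)}=-c^{t}_{k,u}$ together with the sign-coherence of $c$-vectors, and it is exactly what converts the red count $\ell+r-\varphi^{t_0}_t$ of the rotated big sequence into the desired $r-\varphi^{t_0}_t$.
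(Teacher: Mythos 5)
Your argument is correct in substance, but it takes a genuinely different route from the paper's. The paper obtains $r'=r-\varphi^{t_0}_t$ by double counting: it introduces an auxiliary vertex $t_1$ whose $C$-matrix in the $t_0$-pattern is a permutation matrix, builds long sequences based at $t_1$ and at $t'$ out of greening (cycle) sequences together with the given path, and compares red counts via Corollary \ref{rotationlemma} and the rotation lemma for greening sequences (Proposition \ref{rotation lemma of greening seqs}); parts (1)--(3) then drop out of two linear relations. You instead prove (3) head on: conjugate the given sequence up to $t_0$, read off the red count $\ell+r-\varphi^{t_0}_t$ from the definition of $\varphi^{t_0}_t$ plus the (correct) observation that the two traversals of a tree edge contribute exactly one red mutation, rotate $\ell$ times by Corollary \ref{rotationlemma}, and cancel the backtracking tail pair by pair, each cancellation removing exactly one red mutation since $c^{t}_{k,\mu_k(u)}=-c^{t}_{k,u}$, $c$-vectors are sign-coherent, and $\mu_k\mu_k=\mathrm{id}$ leaves every other colour in the sequence unchanged. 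This bypasses greening sequences entirely and makes (1) immediate; it is arguably more transparent than the paper's bookkeeping.

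Two points need tightening. First, the opening step is not literally ``by the definition'': in the definition of $\varphi^{t_0}_t$ one chooses $t_0^-$ first and this \emph{determines} $t^-$, whereas your $t^-$ is the endpoint of the given path, an arbitrary vertex with $C^{t}_{t^-}=-P$. To reconcile the two, apply the Conjugation Lemma (Theorem \ref{conj}) $\ell$ times to the reddening sequence $(i_1,\dots,i_m)$ of $B_t$ along the path from $t$ to $t_0$; this exhibits a vertex $t_0^-$ with $C^{t_0}_{t_0^-}=-P_{\sigma}$ whose determined $t^-$ is exactly yours, and the independence of $\varphi^{t_0}_t$ from the choice of $t_0^-$, established right after the definition, then justifies computing $\varphi^{t_0}_t=\alpha-\beta$ with this choice (the paper's own proof is equally terse on this point). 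Second, in (2) the claim that Lemma \ref{key} makes the patterns at $t$ and $t'$ isomorphic via $\pi$ overstates that lemma: one needs $B_{t'}=\pi(B_t)$, i.e.\ that the $C$-matrix determines the exchange matrix up to the permutation, and Lemma \ref{kk3} is not the tool that supplies it; the paper's proof of (2) relies on the same unstated transport, so this is a shared rather than a new gap, but it should be made explicit.
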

\begin{proof}
    Assume $t,t',t^-\in\mathbb T_n$ satisfying $\sigma(C_{t}^{t_0})=C_{t'}^{t_0}$ for a permutation $\sigma$ and $\sigma'(C_{t^-}^{t_0})=-I_n$ for a permutation $\sigma'$. Let $(j_1, \dots, j_l)$ be the sequence of indices passed through by the path from $t$ to $t_0$. Then $(\sigma(j_1),\dots,\sigma(j_l)$ is the sequence of indices passed through by the path from $t'$ to a vertex $t_1$ with $\sigma^{-1}(C_{t_1}^{t_0})=I_n$ and $((\sigma')^{-1}(j_1),\dots,(\sigma')^{-1}(j_l)$ is the sequence of indices passed through by the path from $t^-$ to $t_0^-$. Denote by $(k_1, \dots, k_s)$ the sequence of indices passed through by the path from $t_1$ to $t$. 

    In a matrix pattern $(B_t,C_t^{t_1},G_t^{t_1})_{t\in \mathbb T_n}$ at the initial vertex $t_1$, assume the number of red mutations in $((\sigma')^{-1}(j_1),\dots,(\sigma')^{-1}(j_l))$ from $t^-$ to $t_0^-$,that in $(k_1,\dots,k_s)$ from $t_1$ to $t$ and that in $(j_1,\dots,j_l)$ from $t$ to $t_0$ equals $a$, $b$ and $a''$ respectively. Also note that the number of red mutations in $(i_1,\dots,i_m)$ from $t$ to $t^-$ equals $r$ since $\sigma^{-1}(C_{t_1}^{t_0})=I_n$.

    In a matrix pattern $(B_t,C_t^{t'},G_t^{t'})_{t\in \mathbb T_n}$ at the initial vertex $t'$, assume the number of red mutations in $(i_1,\dots,i_m)$ from $t$ to $t^-$, that in $(\sigma(j_1),\dots,\sigma(j_l))$ from $t'$ to $t_1$ and that in $(k_1,\dots,k_s)$ from $t_1$ to $t$ equals $r'$, $a'$ and $b'$ respectively. 
    
    Then $(\sigma(j_1),\dots,\sigma(j_l),k_1,\dots,k_s)$ is a $(a'+b')$-greening sequence of $B_{t'}$ and \[(\sigma(j_1),\dots,\sigma(j_l),k_1,\dots,k_s, i_1,\dots,i_m)\] is a $(a'+b'+r')$-reddening sequence of $B_{t'}$, while $(k_1,\dots,k_s,j_1,\dots,j_l)$ is a $(a''+b)$-greening sequence of $B_{t_1}$ and $(k_1,\dots,k_s, i_1,\dots,i_m,(\sigma')^{-1}(j_1),\dots,(\sigma')^{-1}(j_l))$ is a $(a+b+r)$-reddening sequence of $B_{t_1}$.

    Following from Corollary \ref{rotationlemma} and Proposition \ref{rotation lemma of greening seqs}, we have the following equations
    \[a'+b'=a''+b,\qquad\text{and}\qquad a'+b'+r'=a+b+r.\]
    Therefore, $r'=a+b+r-(a'+b')=a+b+r-(a''+b)=r-(a''-a)$, which equals $r-\varphi_t^{t_0}$ as $\sigma^{-1}(C_{t_1}^{t_0})=I_n$ and hence induces (1) and (3). (2) follows from (3) and the fact that a $p$-reddening sequence of $B_t$ induces a $p$-reddening sequence of $B_{t'}$ via the permutation $\sigma$.
\end{proof}

According to Theorem \ref{thm of conjugation difference} (2), the conjugation difference $\varphi_t^{t_0}$ is also well-defined for the exchange graph. In this case, $\varphi_t^{t_0}$ does not depend on the choices of the path from $t$ to $t_0$. 

Then we may regard a maximal green sequence of $B_t$ as a sequence of labels of edges passed through by a path from $t$ to $t^-$ with the theoretically minimal number $\varphi_t^{t_0}$ of red mutations, which equals $0$ when $t$ is the initial vertex but may be positive in general. This allows us to talk about maximal green sequences of any exchange matrix in the mutation equivalent class in one fixed matrix pattern.

\begin{corollary}
    Let $(B_t,C_t^{t_0},G_t^{t_0})_{t\in \mathbb T_n}$ be a matrix pattern at the initial vertex $t_0$ with corresponding exchange graph $E_{t_0}$. A maximal green sequence of $B_t$ is a sequence of mutations corresponding to a path from $t$ to $t^-$ in the exchange graph $E_{t_0}$ with exactly $\varphi_t^{t_0}$ opposite arrows.
\end{corollary}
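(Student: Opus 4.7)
The plan is to read off this corollary directly from part (3) of Theorem \ref{thm of conjugation difference}, after translating between ``paths in $\mathbb T_n$'' and ``paths in the exchange graph $E_{t_0}$''. First I would recall the orientation convention for $E_{t_0}$: each directed edge of $E_{t_0}$ corresponds to a green mutation with respect to the initial seed $\Sigma_{t_0}$, and the opposite arrows correspond exactly to red mutations in the $t_0$-based matrix pattern $(B_s,C_s^{t_0},G_s^{t_0})_{s\in\mathbb T_n}$. Consequently, given any path $\pi$ in $E_{t_0}$, lifting $\pi$ to a walk in $\mathbb T_n$ and reading off the labels yields a mutation sequence $(i_1,\dots,i_m)$, and the number of opposite arrows of $\pi$ equals the number of red mutations of $(i_1,\dots,i_m)$ in the $t_0$-based pattern.

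Next I would set up both directions of the claimed bijection. For the forward direction, let $(i_1,\dots,i_m)$ be a maximal green sequence of $B_t$. By definition it is a $0$-reddening sequence of $B_t$, so it produces a path in $\mathbb T_n$ from $t$ to some vertex $t^-$ with $C_{t^-}^{t}=-P$ for a permutation $P$; projecting to $E_{t_0}$ gives a path from $t$ to $t^-$. Viewing the same sequence now in the $t_0$-based pattern, let $r$ denote its number of red mutations. Part (3) of Theorem \ref{thm of conjugation difference} asserts that $(i_1,\dots,i_m)$ is an $(r-\varphi_t^{t_0})$-reddening sequence of $B_t$, so $r-\varphi_t^{t_0}=0$, i.e., the projected path has exactly $\varphi_t^{t_0}$ opposite arrows. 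For the converse direction, let $\pi$ be any path from $t$ to $t^-$ in $E_{t_0}$ with exactly $\varphi_t^{t_0}$ opposite arrows. Lifting $\pi$ produces a sequence $(i_1,\dots,i_m)$ from $t$ to some lift of $t^-$ in $\mathbb T_n$ with precisely $\varphi_t^{t_0}$ red mutations in the $t_0$-based pattern, and Theorem \ref{thm of conjugation difference}(3) again tells us this is a $(\varphi_t^{t_0}-\varphi_t^{t_0})=0$-reddening sequence of $B_t$, i.e., a maximal green sequence.

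The only subtlety worth flagging—and the ``main obstacle'' if there is one—is showing that the objects ``$t^-$'' and ``$\varphi_t^{t_0}$'' descend from $\mathbb T_n$ to $E_{t_0}$ in a well-defined way, so that the statement even makes sense at the level of the exchange graph; this is precisely what Theorem \ref{thm of conjugation difference}(2) guarantees (the conjugation difference depends only on $C_t^{t_0}$ up to simultaneous row/column permutation, and all candidate ``$t^-$'' vertices have $C$-matrix $-P$ and thus coincide as a single vertex of $E_{t_0}$). With that well-definedness in hand, the two paragraphs above combine to give the bijection between maximal green sequences of $B_t$ and paths in $E_{t_0}$ from $t$ to $t^-$ with exactly $\varphi_t^{t_0}$ opposite arrows, which is the corollary.
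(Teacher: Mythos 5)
Your argument is correct and follows exactly the route the paper intends: the corollary is read off from Theorem \ref{thm of conjugation difference}(3) (a path from $t$ to $t^-$ with $r$ red mutations in the $t_0$-based pattern is an $(r-\varphi_t^{t_0})$-reddening sequence of $B_t$, hence a maximal green sequence iff $r=\varphi_t^{t_0}$), combined with the identification of opposite arrows of $E_{t_0}$ with red mutations and the well-definedness supplied by part (2). This matches the paper's (unwritten, immediate) proof, so no further comparison is needed.
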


\begin{remark}
    We may also explain the Conjugation Lemma and hence the Rotation Lemma via conjugation difference as follows: let $t_0\frac{j}{\quad\;}t'$ be an edge labeled by $j$ in $\mathbb T_n$, then by definition, the sum of the number of red mutations of two extra mutations added by the conjugation in direction $j$ and $\varphi_{t'}^{t_0}$ equals $1$.
\end{remark}

\section{Target before Source Theorem}\label{section: target before source}
In this section, we prove Target before Source Theorem for acyclic sign-skew-symmetric matrices and Target before Source Theorem of $c$-vectors for arbitrary totally sign-skew-symmetric matrices.  
 
\begin{lemma}\label{qq}
Let $(B_t,C_t^{t_0},G_t^{t_0})_{t\in \mathbb T_n}$ be a matrix pattern at the initial vertex $t_0$ and  $j\in [1,n]$ a source of $B_{t_0} = (b_{ij})_{n\times n }$. Let $t_0\frac{i_1}{\quad\quad}   t_1\frac{i_2}{\quad\quad}\;  \dots\;  \frac{i_m}{\quad\quad}   t_{m}$ be a path on $\mathbb T_n$ such that $i_p\neq j$ for $p\in [1,m]$. Then the $j$-th row of $C^{t_0}_{t_m}$ are $e^T_j$, and the $j$-th column of $C^{t_0}_{t_m}$ is $e_j$. 
\end{lemma}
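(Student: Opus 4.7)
The plan is to split the statement into two claims---the $j$-th row of $C^{t_0}_{t_m}$ equals $e_j^T$, and the $j$-th column equals $e_j$---and prove them separately. The first does not actually need the source hypothesis, only that no $i_p$ equals $j$; the second genuinely uses that $j$ is a source of $B_{t_0}$.

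For the row claim, since the mutation of the $G$-matrix only modifies the mutated column, the $j$-th column of $\tilde G^{t_0}_{t_p}$ stays equal to $e_j$ for every $p\leq m$. Applying the second duality $(\tilde G^{t_0}_{t_m})^T C^{t_0}_{t_m}=I$ from Lemma~\ref{key}(iii) to the vector $e_j$ then yields $e_j^T C^{t_0}_{t_m}=e_j^T$, so the $j$-th row of $C^{t_0}_{t_m}$ is $e_j^T$.

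For the column claim I would induct on $p$, with the hypothesis that the $j$-th column of $C^{t_0}_{t_p}$ is $e_j$. The base case $p=0$ is immediate. For the inductive step, set $k=i_{p+1}\neq j$. The induction hypothesis combined with the second duality $\tilde G_{t_p}^{t_0\,T}C^{t_0}_{t_p}=I$ gives that the $j$-th row of $\tilde G^{t_0}_{t_p}$ is $e_j^T$, and Lemma~\ref{key}(iv) transports this to the fact that the $j$-th row of $G^{t_0}_{t_p}$ is also $e_j^T$. Reading the $(j,k)$-entry of the first duality $G^{t_0}_{t_p}B_{t_p}=B_{t_0}C^{t_0}_{t_p}$ then gives
\[
b_{jk,t_p}=\sum_{l=1}^n b_{jl,t_0}\,c_{lk,t_p}.
\]
Since $j$ is a source of $B_{t_0}$ we have $b_{jl,t_0}\geq 0$ for every $l$, and column sign-coherence of $c$-vectors forces the sign of $b_{jk,t_p}$ to agree with $\varepsilon_k(C^{t_0}_{t_p})$ (or $b_{jk,t_p}=0$). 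The sign-skew-symmetry of $B_{t_p}$ then flips this to $\varepsilon_k(C^{t_0}_{t_p})b_{kj,t_p}\leq 0$, so $[\varepsilon_k(C^{t_0}_{t_p})b_{kj,t_p}]_+=0$. Plugging into the matrix mutation formula~(\ref{cmut}) shows that the $j$-th column of $C^{t_0}_{t_{p+1}}$ equals $c^{t_0}_{j,t_p}=e_j$, closing the induction.

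The main obstacle is reconciling two pieces of data which involve different index orderings: the $C$-mutation rule is controlled by $b_{kj,t_p}$, while the source hypothesis is phrased in terms of the row $b_{j\bullet,t_0}$ of the initial matrix. The bridge is to apply the first duality to extract $b_{jk,t_p}$---the index ordering compatible with the source assumption---and only then to invoke sign-skew-symmetry of the current matrix $B_{t_p}$ to convert sign information about $b_{jk,t_p}$ into the sign information about $b_{kj,t_p}$ that actually feeds into the $C$-matrix mutation. Once this translation is in place, sign-coherence of $c$-vectors does the rest.
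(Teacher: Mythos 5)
Your proof is correct and is essentially the paper's own argument: the row claim is handled identically (the $j$-th column of $\tilde G^{t_0}_t$ is untouched by the mutations, then apply the second duality), and the column claim is the same induction in which the coefficient $[\varepsilon_k(C^{t_0}_{t_p})b_{kj,t_p}]_+$ appearing in (\ref{cmut}) is shown to vanish using the first duality, sign-coherence, and the source hypothesis. The only (mirror-image) difference is the bookkeeping in the inductive step --- you read off the $(j,k)$-entry of $G^{t_0}_{t_p}B_{t_p}=B_{t_0}C^{t_0}_{t_p}$ and then flip signs via sign-skew-symmetry of $B_{t_p}$, whereas the paper reads off $b_{kj,t_m}=(\tilde c^{\,t_0}_{k,t_m})^TB_{t_0}e_j$ and invokes Lemma \ref{key}(i) --- and one small citation point: Lemma \ref{key}(iv) alone only gives that the $j$-th row of $G^{t_0}_{t_p}$ is $\pm e_j^T$, so to fix it as $+e_j^T$ (the sign is used in your next step) you should also invoke Lemma \ref{key}(ii).
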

\begin{proof}
Let $(\tilde B_t, \tilde C^{t_0}_t, \tilde G^{t_0}_t)_{t\in \mathbb T_n}$ be the dual matrix pattern of $(B_t,C_t^{t_0},G_t^{t_0})_{t\in \mathbb T_n}$. It follows from (\ref{gmut}) that the $j$-th column of $\tilde{G}_t^{t_0}$ is $e_j$ and hence $j$-th row of $C_t^{t_0}$ is $e^T_j$, thanks to the second duality (\ref{secdual}). We prove that the $j$-th column of $C^{t_0}_{t_m}$ is $e_j$ by induction on $m$. Recall that $j$ is a source of $B$ if and only if $b_{ij}\leq 0$ for any $i\in [1,n]$.

If $m=1$, then $C^{t_0}_{t_1} = J_{i_1} +  [B_{t_0}]_+^{i_1\bullet}$. Since $i_1\neq j$ and $b_{i_1j}\leq 0$, then the $j$-th column of $C^{t_0}_{t_1}$ is $e_j$.

Let $k\in [1,n]$ and $k\neq j$, and let $t_0\frac{i_1}{\quad\quad}   t_1\frac{i_2}{\quad\quad}\;  \dots\;  \frac{i_m}{\quad\quad}   t_{m}\frac{k}{\quad\quad}   t_{m+1}$ be the path on $\mathbb T_n$.  
Let $B_{t_m}= (b_{ij,t_m})_{n\times n}$. Since $C^{t_0}_{t_{m+1}} = C^{t_0}_{t_m}(J_{k} +  [\varepsilon_k(C^{t_0}_{t_m})B_{t_m}]_+^{k\bullet})$, we have that \[c_{j,t_{m+1}}^{t_0} = c_{j,t_m}^{t_0} + [\varepsilon_k(C^{t_0}_{t_m})b_{kj,t_m}]_+\cdot c_{k,t_m}^{t_0}. \]
By the first duality (\ref{firstduality}) and the second duality (\ref{secdual}), we have that
\[    B_{t_m} = (G_{t_m}^{t_0})^{-1}B_{t_0}C^{t_0}_{t_m} = (\tilde{C}_{t_m}^{t_0})^{T}B_{t_0}C^{t_0}_{t_m}.         \]
Therefore \[ b_{kj,t_m} = (\tilde c_{k,t_m}^{ \,t_0})^T B_{t_0}c^{t_0}_{j,t_m} = (\tilde c_{k,t_m}^{ \,t_0})^T B_{t_0}e_j =  (\tilde c_{k,t_m}^{ \,t_0})^T \mathbf{b}_j, \]
where $\mathbf b_j$ is the $j$-th column of $B_{t_0}$ and since $j$ is a source, hence $\mathbf b_j$ is non-positive. By Lemma \ref{key} (i), $\sgn (\tilde c_{k,t_m}^{ \,t_0})= \varepsilon_k(C^{t_0}_{t_m})$. Therefore
\[     \varepsilon_k(C^{t_0}_{t_m})  b_{kj,t_m} =   \varepsilon_k(C^{t_0}_{t_m}) \cdot (\tilde c_{k,t_m}^{ \,t_0})^T \mathbf{b}_j    =    \sgn (\tilde c_{k,t_m}^{ \,t_0})  \cdot   (\tilde c_{k,t_m}^{ \,t_0})^T \mathbf{b}_j \leq 0,    \]
and thus $c_{j,t_{m+1}}^{t_0} = c_{j,t_m}^{t_0} + [\varepsilon_k(C^{t_0}_{t_m})b_{kj,t_m}]_+\cdot c_{k,t_m}^{t_0} = c_{j,t_m}^{t_0} = e_j$. This finishes the proof.
\end{proof}

The following lemma was proved in \cite{brustle2017semipicture} using categoriﬁcation of cluster algebras. Here we provide a proof that works for acyclic sign-skew-symmetric matrices.

\begin{lemma}\label{xxx}
	 Let $(B_t,C_t^{t_0},G_t^{t_0})_{t\in \mathbb T_n}$ be a matrix pattern at the initial vertex $t_0$ such that $B_{t_0} = (b_{ij})_{n\times n }$ is acyclic.  Suppose that $b_{ji}>0$.  Let $t_0\frac{i_1}{\quad\quad}   t_1\frac{i_2}{\quad\quad}\;  \dots\;  \frac{i_m}{\quad\quad}   t_{m}$ be a path on $\mathbb T_n$ such that $i_p\neq i, j$ for $p\in [1,m]$ and let  $B_{t_m} = (b_{ij,t_m})$. Then $b_{ji,t_m} \geq b_{ji}$ and $b_{ij,t_m} \leq b_{ij}$.  In particular,  $-b_{ji,t_m}b_{ij,t_m}\geq -b_{ji}b_{ij}$.
 \end{lemma}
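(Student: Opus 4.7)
The plan is to reduce to the case where $j$ is a source of $B_{t_0}$ and then derive $b_{ji, t_m} \geq b_{ji}$ from Lemma~\ref{qq} together with the first and second dualities; the inequality $b_{ij, t_m} \leq b_{ij}$ follows by running the same argument in the dual matrix pattern (whose initial exchange matrix $\tilde B_{t_0} = -B_{t_0}^T$ is acyclic with $\tilde b_{ji} = -b_{ij} > 0$), and the product inequality is then automatic from the constant signs of $b_{ji, t_m}$ and $b_{ij, t_m}$.

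For the reduction, I would use acyclicity of $B_{t_0}$ to fix a topological ordering $\pi(1), \dots, \pi(n)$; writing $j = \pi(p)$ and $i = \pi(q)$, the hypothesis $b_{ji} > 0$ forces $p < q$, so $\pi(1), \dots, \pi(p-1) \in [1, n] \setminus \{i, j\}$. Mutating sequentially at $\pi(1), \dots, \pi(p-1)$---each a source of the matrix obtained so far---produces an acyclic matrix $B'$ in which $j$ is a source. A direct check with the mutation formula shows that mutation at a source $k$ leaves every entry $b_{ab}$ with $a, b \neq k$ unchanged, since then $b_{ak} \leq 0$ and $b_{kb} \geq 0$ force the correction term to vanish; in particular $b_{ji}$ and $b_{ij}$ are preserved throughout. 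Choosing $t_0' \in \mathbb{T}_n$ with $B_{t_0'} = B'$, the unique reduced path from $t_0'$ to $t_m$ in $\mathbb{T}_n$ still avoids $i$ and $j$, while $B_{t_m}$ itself does not depend on the choice of initial vertex.

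Working in the pattern based at $t_0'$ and using that $j$ is a source of $\tilde B_{t_0'}$ if and only if it is one of $B_{t_0'}$ (by sign-skew-symmetry), Lemma~\ref{qq} applied to the dual pattern yields $\tilde c_{j, t_m}^{t_0'} = e_j$. The first duality together with the identity $G_{t_m}^{-1} = (\tilde C_{t_m})^T$---which follows from Lemma~\ref{key}(iii) in the dual pattern---gives $B_{t_m} = (\tilde C_{t_m}^{t_0'})^T B_{t_0'} C_{t_m}^{t_0'}$, and reading off the $(j, i)$-entry with $\tilde c_{j, t_m}^{t_0'} = e_j$ produces
\[
b_{ji, t_m} \;=\; \sum_{l=1}^{n} b_{jl, t_0'} \cdot c_{li, t_m}^{t_0'}.
\]
Each $b_{jl, t_0'}$ is non-negative because $j$ is a source of $B_{t_0'}$, and since $i$ is not mutated along the path the $i$-th column of $\tilde G_{t_m}^{t_0'}$ equals $e_i$, which via Lemma~\ref{key}(iii) forces $(c_{i, t_m}^{t_0'})_i = 1$, and hence $c_{i, t_m}^{t_0'} \geq 0$ entrywise by sign-coherence. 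The sum is therefore non-negative, and isolating the $l = i$ contribution gives $b_{ji, t_m} \geq b_{ji, t_0'} = b_{ji}$. The main technical step is executing the reduction cleanly, since at each intermediate mutation one must simultaneously verify that it is performed at a source, that the resulting matrix remains acyclic, and that $b_{ji}, b_{ij}$ are unchanged.
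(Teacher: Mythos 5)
Your proposal is correct and follows essentially the same route as the paper: reduce to the case where $j$ is a source by mutating along an admissible sequence of sources (which leaves $b_{ji}$ and $b_{ij}$ unchanged), then expand $B_{t_m} = (\tilde C_{t_m})^T B C_{t_m}$ using Lemma \ref{qq}, the fact that the $i$-th column of $\tilde G_{t_m}$ stays $e_i$, and sign-coherence. The only cosmetic differences are that you obtain $\tilde c_{j,t_m}=e_j$ by applying Lemma \ref{qq} in the dual pattern rather than via Lemma \ref{key}(iv), and you deduce $b_{ij,t_m}\leq b_{ij}$ by appealing to the dual pattern instead of the paper's parallel direct computation.
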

\begin{proof}
	We may, without loss of generality, assume that $j<i$ and $(1,2,\dots,n)$ is an admissible sequence of sources of $B_{t_0}$. Let
\[ t_{*}\frac{j-1}{\quad\quad}\;  \dots\; \frac{2}{\quad\quad}   t_{-1} \frac{1}{\quad\quad}   t_0\frac{i_1}{\quad\quad}   t_1\frac{i_2}{\quad\quad}\;  \dots\;  \frac{i_m}{\quad\quad}   t_{m}\] be the path on $\mathbb T_n$.
Clearly, $j$ is a source of $B_{t_{*}}$  and $b_{ij,t_{*}} = b_{ij}$ and $b_{ji,t_{*}} = b_{ji}$. Let $(B_t,C_t^{t_*},G_t^{t_*})_{t\in \mathbb T_n}$ be the matrix pattern at the initial vertex $t_*$, and $(\tilde B_t, \tilde C_t^{t_*},\tilde G_t^{t_*})_{t\in \mathbb T_n}$ its dual matrix pattern. Then 
\[     B_{t_m} =       (\tilde C_{t_m}^{t_*})^T B_{t_*}     C_t^{t_*}\quad \Longrightarrow  \quad b_{ji,t_m} =  (\tilde c^{\,t_*}_{j,t_m})^T B_{t_*} c^{t_*}_{i,t_m},\quad \text{and}\quad  b_{ij,t_m} =  (\tilde c^{\, t_*}_{i,t_m} )^TB_{t_*} c^{t_*}_{j,t_m}.  \]
Thanks to Lemma \ref{qq} and Lemma \ref{key} (iv), we know that $\tilde c^{\,t_*}_{j,t_m} = c^{\,t_*}_{j,t_m}=e_j$. Since $i$ does not appear in the mutation sequence, the $i$-th columns of $G^{\,t_*}_{t_m}$ and $\tilde G^{\,t_*}_{t_m}$ are $e_i$'s and thus the $i$-th rows of $\tilde C^{\,t_*}_{t_m}$ and $C^{\,t_*}_{t_m}$ are $e^T_i$'s which implies that the $i$-th coordinates of $\tilde c^{\,t_*}_{i,t_m}$ and $c^{\,t_*}_{i,t_m}$ are $1$'s and the other coordinates are non-negative. For simplicity, we write $\tilde c^{\,t_*}_{i,t_m}=(y_1,\dots,y_n)^T$ and $c^{\,t_*}_{i,t_m}=(x_1,\dots,x_n)^T$ with $x_i=y_i=1$ and $x_s,y_s\geq 0$ for $s\in [1,n]$.

Since $j$ is a source of $B_{t_*}$, then $b_{js,t_*}\geq 0$ and $b_{sj,t_*}\leq 0$ for $s\in [1,n]$. Therefore we have that
 \[    b_{ji,t_m} =  (\tilde c^{\,t_*}_{j,t_m})^T B_{t_*} c^{t_*}_{i,t_m}  =     e_j^T  B_{t_*}  (x_1,\dots,x_n)^T = b_{ji} + \sum\limits_{s\neq i}x_sb_{js,t^*}    \geq b_{ji} >0 ,      \]
and 
 \[    b_{ij,t_m} =  (\tilde c^{\,t_*}_{i,t_m})^T B_{t_*} c^{t_*}_{j,t_m}  =   (y_1,\dots,y_n)     B_{t_*}  e_j= b_{ij} + \sum\limits_{s\neq i}y_sb_{sj,t^*}    \leq b_{ij} <0.    \]
 In particular, we have that $-b_{ji,t_m}b_{ij,t_m}\geq -b_{ji}b_{ij}$.
\end{proof}

Let $B=(b_{ij})_{n\times n}$ be a totally sign-skew-symmetric matrix. For any subset $V\subset  [1,n]$, we denote $B^V$ the matrix obtained from $B$ by deleting the $i$-th row and $i$-th column of $B$ for every $i\notin V$, and we call $B^V$ the {\em submatrix of $B$ induced by $V$}.  
If $B$ is skew-symmetric, Muller gave the relations of scattering diagrams for $B^V$ and $B$ in  \cite{muller}. Of course, his proof works for skew-symmetrizable matrices.  For totally sign-skew-symmetric matrices, the relation of their $g$-fans is given in [Theorem 7.3, \cite{pan2023mutation}] which is a weaker version of Muller's result in [Theorem 33, \cite{muller}], and we refer to \cite{nakanishi2023pattern} for the details of $g$-fans.  

For a vector $v=(v_1,\dots,v_n)^T\in \mathbb R$, define \[ \supp(v):= \{i\in [1,n]\, \,|\,\, v_i\neq 0\}.\]  For any subset $V\subset  [1,n]$, denote $\pi_V: \mathbb R^n\rightarrow \mathbb R^V$ the natural projection.

Recall that in \cite{fomin2017introductionii}, a property of a totally sign-skew-symmetric matrix is called {\em hereditary} if it is preserved under restriction to any principal submatrix. The following lemma shows that the existence of maximal green sequences for totally sign-skew-symmetric matrices is a hereditary property.

\begin{lemma}[Hereditary property of existence of maximal green sequences]\label{submax}
	Let $B$ be a totally sign-skew-symmetric matrix and $B^V$ the submatrix of $B$ induced by $V\subset [1,n]$. Then a maximal green sequence (or a reddening sequence, respectively) of $B$ with the associated sequence of $c$-vectors $c_0,\dots,c_{m-1}$ induces a maximal green sequence (or a reddening sequence, respectively) of $B^V$ with the associated sequence of $c$-vectors $\pi_V(c_{i_1}),\dots,\pi_V(c_{i_k})$, where $c_{i_1},\dots,c_{i_k}$ is the sub-sequence of $c_0,\dots,c_{m-1}$ such that $\supp(c_{i_s})\subset V$ for $1\leq s\leq k$.  
	
	Furthermore, if a maximal green sequence $B$ begins with a sequence of mutations on vertices in $V$, then there is a maximal green sequence for $B^V$  which begins with the same sequence of mutations.
\end{lemma}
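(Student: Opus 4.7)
The plan is to use [Theorem 7.3, \cite{pan2023mutation}], which identifies the $g$-fan of $B^V$ as a slice of the $g$-fan of $B$ by the coordinate subspace $\mathbb{R}^V\subset\mathbb{R}^n$. Under this identification, walls of the $g$-fan of $B^V$ correspond to intersections with $\mathbb{R}^V$ of those walls of the $g$-fan of $B$ whose defining $c$-vectors have support in $V$.

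For the main statement, a reddening sequence of $B$ corresponds to a path of mutations from the initial seed (with $C=I$) to a seed with $C=-P_\sigma$, equivalently a path in the $g$-fan of $B$ from the interior of the positive orthant to that of the negative orthant, crossing walls labeled by $c_0,\dots,c_{m-1}$ in order. Restricting this path to $\mathbb{R}^V$ gives a path in the $g$-fan of $B^V$ from the positive to the negative orthant of $\mathbb{R}^V$, crossing exactly the walls that arise as intersections of $\mathbb{R}^V$ with walls of the $B$-$g$-fan labeled by $c$-vectors in $\mathbb{Z}^V$. By the identification above, this is a reddening sequence of $B^V$ whose associated $c$-vectors are precisely $\pi_V(c_{i_1}),\dots,\pi_V(c_{i_k})$, the projections of the sub-sequence of the original $c$-vectors with support in $V$.

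For the \emph{Furthermore} part, suppose that $i_1,\dots,i_r\in V$. A direct induction on $r$ using the $C$-mutation rule (\ref{ccmut}) shows that $c_0,\dots,c_{r-1}$ all have support in $V$: the base case $c_0=e_{i_1}$ is immediate, and each subsequent $c_k$ is $e_{i_{k+1}}$ plus integer multiples of a previous, inductively $V$-supported $c$-vector. Combined with the main statement, these first $r$ mutations appear as the initial segment of the induced reddening sequence of $B^V$.

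The main obstacle is making the geometric restriction rigorous for totally sign-skew-symmetric matrices, in particular verifying that the path obtained by restriction to $\mathbb{R}^V$ crosses the walls of the $g$-fan of $B^V$ in the correct order and with the correct $c$-vectors. A more combinatorial alternative, by induction on $m$, encounters its principal difficulty in the case $i_1\notin V$: mutation at $i_1$ alters $B^V$ via the formula $b_{ij}+\tfrac{|b_{i,i_1}|b_{i_1,j}+b_{i,i_1}|b_{i_1,j}|}{2}$ for $i,j\in V$, so one cannot simply discard the step and invoke the inductive hypothesis on $(i_2,\dots,i_m)$ with the same $V^{}$-submatrix. Either way, the structural content of [Theorem 7.3, \cite{pan2023mutation}] on $g$-fan restriction is the essential ingredient that bridges this gap.
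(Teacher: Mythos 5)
Your route is essentially the paper's: the published proof consists precisely of invoking [Theorem 7.3, \cite{pan2023mutation}] (the $g$-fan restriction to $\mathbb{R}^V$) together with the path-crossing arguments of Theorems 9 and 17 of \cite{muller}, which is exactly the slice-and-restrict argument you outline, including the initial-segment analysis for the ``furthermore'' part. The rigor gap you flag about restricting the path is handled in the paper only by deferring to Muller's arguments, so your proposal matches both the method and the level of detail of the original.
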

\begin{proof}
  It follows from Theorem 7.3 in \cite{pan2023mutation} and the same arguments of the proofs of Theorem 9 and Theorem 17 in \cite{muller}.
\end{proof}

\begin{theorem}[Target before Source Theorem]\label{tbsc}
		 Let $(B_t,C_t^{t_0},G_t^{t_0})_{t\in \mathbb T_n}$ be a matrix pattern at the initial vertex $t_0$ such that $B_{t_0} = (b_{ij})_{n\times n }$ is acyclic.  Suppose that $b_{ji}>0$ and $-b_{ji}b_{ij}\geq 4$. Let $(i_1,\dots,i_m)$ be a maximal green sequence of $B_{t_0}$, and let \[p = \min\{  s \,| \,  i_s = j, \,1\leq s\leq m     \},  \quad\text{and} \quad q = \min\{  s \,| \,  i_s = i, \,1\leq s\leq m     \}.\] Then $p>q$.
\end{theorem}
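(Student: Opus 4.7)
The plan is to argue by contradiction: assuming $p \leq q$, I will transport the maximal green sequence via the Rotation Lemma to the vertex $t_{p-1}$, restrict to the $2$-element subset $V = \{i, j\}$ via the hereditary property, and then invoke the rank-$2$ obstruction of Lemma \ref{ranktwo} to derive a contradiction.

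Since $i \neq j$ (because $b_{ij} \neq 0$), the only alternative to $p > q$ is $p < q$, and then $i_1, \ldots, i_{p-1}$ avoids both $i$ and $j$. Lemma \ref{xxx} applied to the path $t_0 \to t_1 \to \cdots \to t_{p-1}$ therefore guarantees that the mutated matrix $B_{t_{p-1}}$ still satisfies $b_{ji,t_{p-1}} \geq b_{ji} > 0$, $b_{ij,t_{p-1}} \leq b_{ij} < 0$, and in particular $-b_{ij,t_{p-1}} b_{ji,t_{p-1}} \geq 4$. Iterating the Rotation Lemma (Corollary \ref{rotationlemma}) $p-1$ times then produces a maximal green sequence of $B_{t_{p-1}}$ whose first entry is $i_p = j$. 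Applying the ``Furthermore'' clause of Lemma \ref{submax} with $V = \{i, j\}$, I extract a maximal green sequence of the $2 \times 2$ submatrix $B^V_{t_{p-1}}$ that still starts with $j$.

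To finish, I interpret $B^V_{t_{p-1}}$ as a $2 \times 2$ sign-skew-symmetric matrix whose $(j, i)$-entry is positive and whose off-diagonal product is at most $-4$, so after relabeling the indices of $V$ so that $j$ plays the role of the first index, the matrix fits the form required by Lemma \ref{ranktwo}. That lemma then forbids any maximal green sequence of $B^V_{t_{p-1}}$ from beginning with $j$, contradicting the previous step and forcing $p > q$. The main subtlety lies in verifying the applicability of the hereditary lemma: one must ensure that the leading mutation $j$ of the rotated maximal green sequence survives faithfully in the restriction to $V$, which is exactly what the ``Furthermore'' part of Lemma \ref{submax} encodes. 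Once this is granted, the remaining steps are straightforward invocations of the cited lemmas.
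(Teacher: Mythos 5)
Your proposal is correct and follows essentially the same route as the paper's proof: assume $p<q$, use Lemma \ref{xxx} to preserve the rank-two obstruction at $t_{p-1}$, rotate via Corollary \ref{rotationlemma} to get a maximal green sequence of $B_{t_{p-1}}$ starting with $j$, restrict to $V=\{i,j\}$ via the ``Furthermore'' part of Lemma \ref{submax}, and contradict Lemma \ref{ranktwo}. The only differences are cosmetic, such as your explicit remark on the relabeling of indices in the rank-two lemma.
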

\begin{proof} We depict the maximal green sequence on $\mathbb T_n$ as follows.
		\[  
	(B_{t_0} ,C^{t_0}_{t_0},G^{t_0}_{t_0})
	\frac{i_1}{\quad\quad} (B_{t_1} ,C^{t_0}_{t_1},G^{t_0}_{t_1})\frac{i_2}{\quad\quad}\;  \dots\;  \frac{i_m}{\quad\quad}  (B_{t_m} ,C^{t_0}_{t_m},G^{t_0}_{t_m}) \]
	
    Assume that $p<q$.  Then $i_1, i_2,\dots,i_{p-1}\neq i,j$.  Let $B_{t_{p-1}}=(b'_{ij})_{n\times n}$. By Lemma \ref{xxx}, we have that $-b'_{ij}b'_{ji}\geq 4$ and $b'_{ji}\geq b_{ji}>0$. 
    By the Rotation Lemma, the sequence $(i_p, i_{p+1},\dots, \sigma^{-1}(i_{p-1}))$ is a maximal green sequence of $B_{t_{p-1}}$ beginning with $j$. Let $V=\{i,j\}$.  By Lemma  \ref{submax}, there is a maximal green sequence of $B_{t_{p-1}}^V$ beginning with $j$, which contradicts Lemma \ref{ranktwo}. Hence $p>q$.
\end{proof}

    The following two results were proved in \cite{brustle2017semipicture} for skew-symmetrizable matrices and here we give a proof for the totally sign-skew-symmetric matrix using Lemma \ref{submax}.
    Let $B$ be a totally sign-skew-symmetric matrix, and $(i_1,\dots,i_m)$ a reddening sequence of $B$ with the associated sequence of $c$-vectors $c_0,\dots,c_{m-1}$. We call the sub-sequence $(i_{k+1}, \dots, i_m)$ a {\em green tail of  $(i_1,\dots,i_m)$} if $c_{k},\dots,c_{m-1}$ are non-negative.

\begin{theorem}[Target before Source Theorem of $c$-vectors]\label{tbscc}
			 Let $(B_t, C_t^{t_0},G_t^{t_0})_{t\in \mathbb T_n}$ be a matrix pattern at the initial vertex $t_0$ such that $B_{t_0} = (b_{ij})_{n\times n }$.  Suppose that $b_{ji}>0$ and $-b_{ji}b_{ij}\geq 4$. Let $(i_1,\dots,i_m)$ be a reddening sequence of $B_{t_0}$ with the associated sequence of $c$-vectors $c_0,\dots, c_{m-1}$,  and let $(i_{k+1}, \dots, i_m)$ be a  green tail of  $(i_1,\dots,i_m)$.  If  both $e_j$ and $e_i$ appear in $c_k, \dots, c_{m-1}$, then $e_i$ appears before $e_j$.
\end{theorem}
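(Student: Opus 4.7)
The plan is to argue by contradiction via a reduction to the $2\times 2$ case using Lemma~\ref{submax}, followed by the Rotation Lemma and Theorem~\ref{tbsc}. This mirrors the strategy of Theorem~\ref{tbsc} but is adapted from a full maximal green sequence to a green tail of a reddening sequence. Assume for contradiction that $e_j$ appears in $c_k,\dots,c_{m-1}$ before $e_i$. Applying Lemma~\ref{submax} with $V=\{i,j\}$ yields an induced reddening sequence of the $2\times 2$ submatrix $B^V = \begin{pmatrix}0 & b_{ij}\\ b_{ji} & 0\end{pmatrix}$ whose associated $c$-vectors are the $V$-projections of those among $c_0,\dots,c_{m-1}$ with support in $V$. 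Since the green tail is entirely non-negative and contains both $e_i$ and $e_j$, their projections form a suffix of non-negative $c$-vectors in the induced sequence, namely a green tail of the induced $V$-reddening sequence containing both $e_i$ and $e_j$ with $e_j$ still appearing first. Thus we may assume $n=2$ and $B_{t_0}=B^V$, which is acyclic with source $j$, target $i$, and $-b_{ij}b_{ji}\geq 4$.

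In this $2 \times 2$ setting, denote the induced reddening sequence by $(l_1,\dots,l_{m'})$ with green tail $(l_{r+1},\dots,l_{m'})$ and $c$-vectors $d_0,\dots,d_{m'-1}$, and let $r'\in\{r,\dots,m'-1\}$ be the smallest index with $d_{r'}=e_j$. Applying the Rotation Lemma $r'$ times,
\[(l_{r'+1},\dots,l_{m'},\sigma^{-1}(l_1),\dots,\sigma^{-1}(l_{r'}))\]
is a reddening sequence of $B^V_{t_{r'}}$ with the same associated permutation $\sigma$. The central step is to argue that the initial sub-sequence $(l_{r'+1},\dots,l_{m'})$ is itself a maximal green sequence of $B^V_{t_{r'}}$ in the pattern at $t_{r'}$. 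Granting this, $B^V_{t_{r'}}$ is a $2\times 2$ acyclic matrix still satisfying $-b_{12}b_{21}\geq 4$ (a mutation-invariant in rank $2$), so Theorem~\ref{tbsc} applies. Using Lemma~\ref{qq} and the parity of $r'$ (so that $B^V_{t_{r'}}$ is either $B^V$ or $-B^V$) to identify the source of $B^V_{t_{r'}}$, one checks that the first mutation $l_{r'+1}$, whose $c$-vector in the pattern at $t_{r'}$ is $e_{l_{r'+1}}$ and corresponds to $d_{r'}=e_j$ in the pattern at $t_0$, is precisely at the source of $B^V_{t_{r'}}$; Theorem~\ref{tbsc} then forces the MGS to begin at the target, a contradiction.

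The main obstacle is verifying the central step, namely that $(l_{r'+1},\dots,l_{m'})$ is a maximal green sequence of $B^V_{t_{r'}}$ in the pattern at $t_{r'}$. This requires showing both that all its $c$-vectors remain non-negative in the shifted pattern, and that the endpoint $C$-matrix $C^{t_{r'}}_{t_{m'}}$ is a negative permutation matrix (so that Lemma~\ref{per} applies). The natural approach is induction on $r'$, peeling off one base-change of the initial vertex at a time: at each step, Lemma~\ref{kk3} governs how signs of $c$-vectors of the form $\pm e_{l_s}$ flip under the base-change, while the sign-coherence of $c$-vectors keeps the remaining non-negative $c$-vectors of the tail non-negative after the cumulative base-change, and Lemma~\ref{per} handles the endpoint structure.
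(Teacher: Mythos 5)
Your first step---the reduction to the rank-two submatrix $B^V$ with $V=\{i,j\}$ via Lemma~\ref{submax}---is exactly the paper's proof; the paper then finishes in one line by quoting the rank-two analysis of Appendix~1 of \cite{brustle2014mgs}, which states precisely that in a green tail of a reddening sequence of a rank-two matrix with $-b_{ij}b_{ji}\geq 4$ the vector $\pi_V(e_i)$ must precede $\pi_V(e_j)$. Your attempt to re-derive this rank-two fact from the Rotation Lemma and Theorem~\ref{tbsc} hinges on the ``central step'' you yourself flag, and that step is false as a statement. Counterexample (rank two, $j$ the source, $i$ the sink, $-b_{ij}b_{ji}\geq 4$): by Lemma~\ref{ranktwo} a maximal green sequence must start at $i$, and $(i,j)$ is one, with associated $c$-vectors $e_i,e_j$ (mutating first at $i$ leaves the $j$-th column equal to $e_j$ by the argument of Lemma~\ref{qq}, and Corollary~\ref{lll} forces the second $c$-vector to be $e_j$). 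Prepending a backtrack gives the $1$-reddening sequence $(j,j,i,j)$ with $c$-vectors $e_j,-e_j,e_i,e_j$ and green tail $(i,j)$. The first occurrence of $e_j$ in the tail is its last entry, so your subsequence $(l_{r'+1},\dots,l_{m'})$ is the single mutation at $j$ out of $t_{r'}$; but a single mutation is never a maximal green sequence (nor a reddening sequence) when $n=2$, since by (\ref{cmut}) one mutation from the rebased initial vertex makes exactly one column of $C^{t_{r'}}$ negative and leaves the other non-negative. So ``a green (sub)tail, read from its own starting vertex, is a maximal green sequence of the rebased matrix'' is simply not true, and the induction you sketch cannot establish it.

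The deeper reason the sketched tools cannot close the gap is that none of them controls the terminal $C$-matrix with respect to the new base point: Lemma~\ref{per} yields $C^{t_0}_{t_{m'}}=-P$ only for the original base $t_0$ and says nothing about $C^{t_{r'}}_{t_{m'}}$ (in the example above it is not a negative permutation matrix); the Rotation Lemma preserves only the total number of red mutations of the full rotated sequence and can redistribute red mutations into what used to be the tail; and Lemma~\ref{kk3} shows that at each change of base point a tail $c$-vector can itself flip from green to red (whenever it equals the relevant unit vector), so even the greenness of the rebased tail is not automatic. Note also that your proposed induction never uses the reductio hypothesis that $e_j$ precedes $e_i$, yet any correct proof of the central step must use it essentially (the step fails without it, as above, and with it the situation is exactly what the theorem rules out). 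Supplying that missing ingredient amounts to redoing the rank-two green-tail analysis of \cite{brustle2014mgs}---which is the citation the paper's proof rests on---so as written your argument does not prove the theorem.
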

\begin{proof} Assume that $e_j$ appears before $e_i$  in $c_k, \dots, c_{m-1}$.
	Let $V=\{i,j\}$ and $B^V$ the submatrix of $B$ induced by $V$.  By Lemma \ref{submax}, $B^V$ has a reddening sequence and this reddening sequence has a green tail beginning with $\pi_V(e_j)$ and the green tail must end with $\pi_V(e_i)$. This is impossible, since if  $\pi_V(e_j)$ and  $\pi_V(e_i)$ appear in a green tail of a reddening sequence of $B^V$, then $\pi_V(e_i)$ must appear before $\pi_V(e_j)$. For details, one may refer to Appendix 1 in \cite{brustle2014mgs}.
\end{proof}

The proof of the following result is the same as the proof of Corollary 3.3.2 in \cite{brustle2017semipicture}.

\begin{corollary}\label{not mutate at target of a multi-arrow}
  Consider any maximal green sequence on any totally sign-skew-symmetric matrix.  Then at each step, the mutation is at an index $j$ of the mutated matrix $B'$ such that there is no index $i$ such that  $-b'_{ij}b'_{ji}\geq 4$ and $b'_{ji}>0$.
\end{corollary}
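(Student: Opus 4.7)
The plan is to reduce the statement to the Target before Source Theorem of $c$-vectors (Theorem \ref{tbscc}) via the Rotation Lemma (Corollary \ref{rotationlemma}). Suppose $(i_1,\dots,i_m)$ is a maximal green sequence of $B_{t_0}$ with associated permutation $\sigma$, and suppose that at step $k$ we mutate at $j=i_k$ in the intermediate matrix $B'=B_{t_{k-1}}$. Arguing by contradiction, I would assume that there exists an index $i$ with $b'_{ji}>0$ and $-b'_{ji}b'_{ij}\geq 4$, and derive a contradiction with the position of $e_j$ inside the associated $c$-vector sequence of a suitable rotated maximal green sequence.

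First I would apply the Rotation Lemma $(k-1)$ times to produce the rotated sequence $(i_k,i_{k+1},\dots,i_m,\sigma^{-1}(i_1),\dots,\sigma^{-1}(i_{k-1}))$, which is still a $0$-reddening sequence but now of $B_{t_{k-1}}$, \ie, a maximal green sequence of $B'$ beginning with the mutation at $j$. Viewing this sequence in the matrix pattern $(B_t,C_t^{t_{k-1}},G_t^{t_{k-1}})_{t\in\mathbb T_n}$ with initial vertex $t_{k-1}$, we have $C^{t_{k-1}}_{t_{k-1}}=I$, so its first $c$-vector is exactly $e_j$. By Corollary \ref{lll}, in the associated $c$-vector sequence of this maximal green sequence every $e_s$ for $s\in[1,n]$ appears exactly once, so in particular both $e_j$ and $e_i$ appear; and since the sequence is $0$-reddening, the whole sequence is its own green tail.

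Then Theorem \ref{tbscc}, applied to the initial matrix $B_{t_{k-1}}=B'$ with the hypotheses $b'_{ji}>0$ and $-b'_{ji}b'_{ij}\geq 4$, forces $e_i$ to appear strictly before $e_j$ in this associated $c$-vector sequence. This contradicts the fact that $e_j$ is already the very first $c$-vector, completing the argument. I do not expect a genuine obstacle: the non-trivial combinatorics is packaged into the Rotation Lemma and Theorem \ref{tbscc}, and the only subtlety is the bookkeeping about the permutation $\sigma$ under iterated rotation and the identification of the first $c$-vector in the new matrix pattern as $e_j$ (which is immediate from $C^{t_{k-1}}_{t_{k-1}}=I$).
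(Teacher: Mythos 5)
Your argument is correct and is essentially the paper's intended proof: the paper simply defers to the proof of Corollary 3.3.2 in \cite{brustle2017semipicture}, which is exactly this argument of rotating the maximal green sequence so that the step in question becomes the first mutation and then contradicting Theorem \ref{tbscc}. Your bookkeeping is sound as well -- the iterated use of Corollary \ref{rotationlemma} with the fixed permutation $\sigma$, the identification of the first $c$-vector as $e_j$ from $C^{t_{k-1}}_{t_{k-1}}=I$, and the appeal to Corollary \ref{lll} to guarantee that $e_i$ occurs and that $e_j$ occurs only once, so no contradiction-free ordering is possible.
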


\section*{Acknowledgment} 
The authors would like to thank the referee for careful reading and helpful comments. They also would like to thank Fang Li for his suggestions. Additionally, the second author wishes to acknowledge Ibrahim Assem and Shiping Liu for their help and support during his stay in Sherbrooke. This research is supported by the National Natural Science Foundation of China under Grant No. 12301048 and the Zhejiang Provincial Natural Science Foundation of China under Grant No. LQ24A010007.

\def\cprime{$'$} \def\cprime{$'$}
\providecommand{\bysame}{\leavevmode\hbox to3em{\hrulefill}\thinspace}
\providecommand{\MR}{\relax\ifhmode\unskip\space\fi MR }
\providecommand{\MRhref}[2]{%
	\href{http://www.ams.org/mathscinet-getitem?mr=#1}{#2}
}

\end{document}